\newtheorem{theorem}{Theorem}[section]
\theoremstyle{plain}
\newtheorem{corollary}[theorem]{Corollary}
\newtheorem{lemma}[theorem]{Lemma}
\newtheorem{proposition}[theorem]{Proposition}
\numberwithin{equation}{section}
\begin{document}
\title[Semiclassical solutions to a Schr\"{o}dinger-Newton system]{Intertwining semiclassical solutions to a Schr\"{o}dinger-Newton system}
\author{Silvia Cingolani}
\address{Dipartimento di Matematica, Politecnico di Bari, via Orabona 4, 70125 Bari, Italy.}
\email{s.cingolani@poliba.it}
\author{M\'{o}nica Clapp}
\address{Instituto de Matem\'{a}ticas, Universidad Nacional Aut\'{o}noma de M\'{e}xico,
Circuito Exterior, C.U., 04510 M\'{e}xico D.F., Mexico.}
\email{mclapp@matem.unam.mx}
\author{Simone Secchi}
\address{Dipartimento di Matematica ed Applicazioni, Universit\`{a} di Milano-Bicocca,
via Cozzi 53, 20125 Milano, Italy.}
\email{Simone.Secchi@unimib.it}
\thanks{S. Cingolani is supported by the MIUR project \textit{Variational and
topological methods in the study of nonlinear phenomena} (PRIN 2007).}
\thanks{M. Clapp is supported by CONACYT grant 129847 and PAPIIT grant IN101209 (Mexico).}
\maketitle

\begin{abstract}
We study the problem
\[%
\begin{cases}
\left(  -\varepsilon\mathrm{i}\nabla+A(x)\right)  ^{2}u+V(x)u=\varepsilon
^{-2}\left(  \frac{1}{|x|}\ast|u|^{2}\right)  u,\\
u\in L^{2}(\mathbb{R}^{3},\mathbb{C}),\text{ \ \ \ \ }\varepsilon\nabla
u+\mathrm{i}Au\in L^{2}(\mathbb{R}^{3},\mathbb{C}^{3}),
\end{cases}
\]
where $A\colon\mathbb{R}^{3}\rightarrow\mathbb{R}^{3}$ is an exterior magnetic
potential, $V\colon\mathbb{R}^{3}\rightarrow\mathbb{R}$ is an exterior
electric potential, and $\varepsilon$ is a small positive number. If $A=0$ and
$\varepsilon=\hbar$ is Planck's constant this problem is equivalent to the
Schr\"{o}dinger-Newton equations proposed by Penrose in \cite{pe2}\ to
describe his view that quantum state reduction occurs due to some
gravitational effect. We assume that $A$ and $V$ are compatible with the
action of a group $G$ of linear isometries of $\mathbb{R}^{3}$. Then, for any
given homomorphism $\tau:G\rightarrow\mathbb{S}^{1}$ into the unit complex
numbers, we show that there is a combined effect of the symmetries and the
potential $V$ on the number of semiclassical solutions $u:\mathbb{R}%
^{3}\rightarrow\mathbb{C}$ which satisfy $u(gx)=\tau(g)u(x)$ for all $g\in G$,
$x\in\mathbb{R}^{3}$. We also study the concentration behavior of these
solutions as $\varepsilon\rightarrow0.\medskip$

\noindent\textsc{MSC2010: }35Q55, 35Q40, 35J20, 35B06.

\noindent\noindent\textsc{Keywords: }Schr\"{o}dinger-Newton system, nonlocal
nonlinearity, electromagnetic potential, semiclassical solutions, intertwining solutions.

\end{abstract}

\section{Introduction}

The \emph{Schr\"{o}dinger-Newton equations} were proposed by Penrose
\cite{pe2} to describe his view that quantum state reduction is a phenomenon
that occurs because of some gravitational influence. They consist of a system
of equations obtained by coupling together the linear Schr\"{o}dinger equation
of quantum mechanics with the Poisson equation from Newtonian mechanics. For a
single particle of mass $m$ this system has the form
\begin{equation}%
\begin{cases}
-\frac{\hbar^{2}}{2m}\Delta\psi+V(x)\psi+U\psi=0,\\
-\Delta U+4\pi\kappa|\psi|^{2}=0,
\end{cases}
\label{sys:s-n}%
\end{equation}
where $\psi$ is the complex wave function, $U$ is the gravitational potential
energy, $V$ is a given potential, $\hbar$ is Planck's constant, and
$\kappa:=\mathrm{G}m^{2}$, $\mathrm{G}$ being Newton's constant. According to
Penrose, the solutions $\psi$ of this system are the \emph{basic stationary
states }into which a superposition of such states is to decay within a certain
timescale, cf. \cite{pe1, pe2, mpt, tm, pe3}.

After rescaling by%
\[
\psi(x)=\frac{1}{\hbar}\frac{\hat{\psi}(x)}{\sqrt{2\kappa m}},\quad
V(x)=\frac{1}{2m}\hat{V}(x),\quad U(x)=\frac{1}{2m}\hat{U}(x),
\]
system~\eqref{sys:s-n} can be written as
\begin{equation}%
\begin{cases}
-\hbar^{2}\Delta\hat{\psi}+\hat{V}(x)\hat{\psi}+\hat{U}\hat{\psi}=0,\\
-\hbar^{2}\Delta\hat{U}+4\pi|\hat{\psi}|^{2}=0.
\end{cases}
\label{sys:s-n-red}%
\end{equation}
The second equation in~\eqref{sys:s-n-red} can be explicitly solved with
respect to $\hat{U}$, so this system is equivalent to the single nonlocal
equation
\begin{equation}
-\hbar^{2}\Delta\hat{\psi}+\hat{V}(x)\hat{\psi}=\frac{1}{\hbar^{2}}\left(
\int_{\mathbb{R}^{3}}\frac{|\hat{\psi}(\xi)|^{2}}{|x-\xi|}d\xi\right)
\hat{\psi}\quad\text{in $\mathbb{R}^{3}$}. \label{eq:1.3}%
\end{equation}

We shall consider a more general equation having a similar structure, namely
\begin{equation}
\left(  -{\varepsilon}\mathrm{i}\nabla+A(x)\right)  ^{2}u+V(x)u=\frac
{1}{\varepsilon^{2}}\left(  \frac{1}{\left\vert x\right\vert }\ast
|u|^{2}\right)  u\quad\text{in $\mathbb{R}^{3}$}, \label{eq:Hartree}%
\end{equation}
where $A\colon\mathbb{R}^{3}\rightarrow\mathbb{R}^{3}$ is an exterior magnetic
potential, $\mathrm{i}$ is the imaginary unit and $\ast$ denotes the
convolution operator. We are interested in semiclassical states, i.e. in
solutions of this equation for $\varepsilon\rightarrow0.$

The existence of one solution can be traced back to Lions' paper~\cite{l}. In
the nonmagnetic case $A=0$ equation (\ref{eq:Hartree}) and related equations
have been investigated by many authors, see e.g. \cite{a, fl, fty, hmt, lieb,
l2, mz, mpt, n,secchi,t, tm} and the references therein. Recently, Wei and
Winter \cite{wei} showed the existence of positive multibump solutions which
concentrate at local minima, local maxima or nondegenerate critical points of
the potential $V$ as $\varepsilon\rightarrow0$. The magnetic case $A\neq0$ was
recently studied in \cite{css} where it was shown that equation
(\ref{eq:Hartree}) has a family of solutions having multiple concentration
regions located around the (possibly degenerate) minima of $V$.

In this paper we consider the situation where $A$ and $V$ are symmetric and we
look for semiclassical solutions of equation (\ref{eq:Hartree}) having
specific symmetries. The absolute value of the solutions we obtain
concentrates at points which need not be local extrema, nor nondegenerate
critical points of $V$ (in fact, we do not even assume that $V$ is
differentiable). We state our main results in the following section and give
some explicit examples.

\section{Statement of results}

\subsection{The results}

Let $G$ be a closed subgroup of the group $O(3)$ of linear isometries of
$\mathbb{R}^{3},$ $A:\mathbb{R}^{3}\rightarrow\mathbb{R}^{3}$ be a $C^{1}%
$-function, and $V:\mathbb{R}^{3}\rightarrow\mathbb{R}$ be a bounded
continuous function with $\inf_{\mathbb{R}^{3}}V>0$, which satisfy
\begin{equation}
A(gx)=gA(x)\text{ \ \ and \ \ }V(gx)=V(x)\quad\text{for all $g\in G$,
$x\in\mathbb{R}^{3}$}. \label{G}%
\end{equation}
Given a continuous homomorphism of groups $\tau:G\rightarrow\mathbb{S}^{1}$
into the group $\mathbb{S}^{1}$ of unit complex numbers, we look for solutions
to the problem%
\begin{equation}%
\begin{cases}
\left(  -\varepsilon\mathrm{i}\nabla+A\right)  ^{2}u+V(x)u=\varepsilon
^{-2}\left(  \frac{1}{\left\vert x\right\vert }\ast|u|^{2}\right)  u,\\
u\in L^{2}(\mathbb{R}^{3},\mathbb{C}),\\
\varepsilon\nabla u+\mathrm{i}Au\in L^{2}(\mathbb{R}^{3},\mathbb{C}^{3}),
\end{cases}
\label{prob}%
\end{equation}
which satisfy%
\begin{equation}
u(gx)=\tau(g)u(x)\text{ \ \ \ for all }g\in G,\text{ }x\in\mathbb{R}^{3},
\label{tau-inv}%
\end{equation}
This implies that the absolute value $\left\vert u\right\vert $ of $u$ is
$G$-invariant, i.e.%
\[
\left\vert u(gx)\right\vert =\left\vert u(x)\right\vert \quad\text{for all
$g\in G$, $x\in\mathbb{R}^{3}$},
\]
whereas the phase of $u(gx)$ is that of $u(x)$ multiplied by $\tau(g).$ A
concrete example is given in subsection \ref{example} below.

Note that if $u$ satisfies (\ref{prob}) and (\ref{tau-inv}) then
$e^{\mathrm{i}\theta}u$ satisfies (\ref{prob}) and (\ref{tau-inv}) for every
$\theta\in\mathbb{R}.$ We shall say that $u$ and $v$ are \emph{geometrically
distinct} if $e^{\mathrm{i}\theta}u\neq v$ for all $\theta\in\mathbb{R}.$

We introduce some notation. For $x\in\mathbb{R}^{3},$ we denote by $Gx$ the
$G$-orbit of $x$ and by $G_{x}$ the $G$-isotropy subgroup of $x,$ i.e.%
\[
Gx:=\left\{  gx\colon g\in G\right\}  ,\quad G_{x}:=\{g\in G:gx=x\}.
\]
A subset $X$ of $\mathbb{R}^{3}$\ is $G$-invariant if $Gx\subset X$ for every
$x\in X.$ The $G$-orbit space of $X$ is the set
\[
X/G:=\{Gx:x\in X\}
\]
of $G$-orbits of $X$ with the quotient topology.

Let $\#Gx$ denote the cardinality of $Gx,$ and define%
\[
\ell_{G,V}:=\inf_{x\in\mathbb{R}^{3}}(\#Gx)V^{3/2}(x),
\]%
\[
M_{\tau}:=\left\{  x\in\mathbb{R}^{3}:(\#Gx)V^{3/2}(x)=\ell_{G,V},\text{
}G_{x}\subset\ker\tau\right\}  .
\]
Assumption (\ref{G}) implies that $M_{\tau}$ is $G$-invariant. Observe that
the points of $M_{\tau}$ need not be neither local minima nor local maxima of
$V$.

Given $\rho>0$ we set $B_{\rho}M_{\tau}:=\{x\in\mathbb{R}^{3}:\,$%
dist$(x,M_{\tau})\leq\rho\},$ and write
\[
\text{cat}_{B_{\rho}M_{\tau}/G}(M_{\tau}/G)
\]
for the Lusternik-Schnirelmann category of $M_{\tau}/G$ in $B_{\rho}M_{\tau
}/G.$

Finally, we denote by $E_{1}$ the least energy of a nontrivial solution to
problem%
\begin{equation}%
\begin{cases}
-\Delta u+u=(\frac{1}{\left\vert x\right\vert }\ast u^{2})u,\\
u\in H^{1}(\mathbb{R}^{3},\mathbb{R}).
\end{cases}
\label{limit}%
\end{equation}
We shall prove the following results.

\begin{theorem}
\label{mainthm1} Assume there exists $\alpha>0$ such that the set
\begin{equation}
\left\{  x\in\mathbb{R}^{3}:(\#Gx)V^{3/2}(x)\leq\ell_{G,V}+\alpha\right\}
\label{comp}%
\end{equation}
is compact. Then, given $\rho,\delta>0,$ there exists $\widehat{\varepsilon
}>0$ such that, for every $\varepsilon\in(0,\widehat{\varepsilon})$, problem
\emph{(\ref{prob})} has at least%
\[
\text{\emph{cat}}_{B_{\rho}M_{\tau}/G}(M_{\tau}/G)
\]
geometrically distinct solutions $u$ which satisfy \emph{(\ref{tau-inv})} and%
\begin{equation}
\left\vert \frac{1}{4}\int_{\mathbb{R}^{3}}\left(  \frac{1}{\left\vert
x\right\vert }\ast\left\vert u\right\vert ^{2}\right)  \left\vert u\right\vert
^{2}-\varepsilon^{5}\ell_{G,V}E_{1}\right\vert <\varepsilon^{5}\delta.
\label{energy}%
\end{equation}

\end{theorem}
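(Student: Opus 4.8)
The plan is to follow the standard Benci--Cerami scheme for semiclassical problems, adapted to the equivariant nonlocal setting. First I would rescale: setting $u(x)=v(x/\varepsilon)$ (or working directly with the $\varepsilon$-dependent functional), problem \eqref{prob} becomes, after extracting a factor $\varepsilon^{3}$, a variational problem whose energy functional $J_\varepsilon$ is defined on the magnetic Sobolev space $H_\varepsilon$ of $\tau$-equivariant functions, i.e. those $u$ satisfying \eqref{tau-inv}. By the principle of symmetric criticality (in the version for $\tau$-equivariant maps), critical points of the restricted functional are genuine solutions of \eqref{prob}. One works on the Nehari manifold $\mathcal N_\varepsilon$, which is well defined because the nonlinearity $(\tfrac{1}{|x|}\ast|u|^2)u$ is $4$-homogeneous and the usual monotonicity holds; the constrained functional satisfies the Palais--Smale condition on $\mathcal N_\varepsilon$ below the first ``orbit'' energy level, which is precisely where the compactness hypothesis on the set \eqref{comp} enters — it prevents mass from escaping to infinity along a PS sequence.

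The core of the argument is the two-map construction that pins down the number of solutions via Lusternik--Schnirelmann category. On the one hand, I would build a map
\[
\Phi_\varepsilon \colon M_\tau/G \longrightarrow \mathcal N_\varepsilon^{c_\varepsilon+o(1)}\big/\mathbb{S}^1
\]
by concentrating a suitably cut-off, phase-corrected copy of the ground state $w$ of the limit problem \eqref{limit} at points of $M_\tau$; the magnetic phase factor $\exp(\tfrac{\mathrm i}{\varepsilon}A(x_0)\cdot x)$ is inserted to kill the magnetic potential to leading order (a Kato-type gauge trick), and the $\tau$-equivariance is enforced by averaging over the finite orbit $Gx_0$ — here $G_{x_0}\subset\ker\tau$ is exactly the condition ensuring this averaged function is nonzero and lies in the right symmetry class. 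A Lebesgue-point / Taylor-expansion computation shows that the energy of $\Phi_\varepsilon(Gx_0)$ is $\varepsilon^{3}(\#Gx_0)E_1 V^{3/2}(x_0)+o(\varepsilon^3)=\varepsilon^3\ell_{G,V}E_1+o(\varepsilon^3)$, uniformly. On the other hand, I would construct a ``barycenter'' map $\beta_\varepsilon$ from low-energy sublevels of $\mathcal N_\varepsilon/\mathbb S^1$ into $B_\rho M_\tau/G$: a concentration-compactness analysis (again using the compactness of \eqref{comp}) shows that any function in $\mathcal N_\varepsilon$ with energy close to $\varepsilon^3\ell_{G,V}E_1$ has its mass $|u|^2$ concentrated near a single $G$-orbit, and that orbit must lie in $B_\rho M_\tau$ for $\varepsilon$ small; the barycenter is then well defined modulo $G$. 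The composition $\beta_\varepsilon\circ\Phi_\varepsilon$ is homotopic to the inclusion $M_\tau/G\hookrightarrow B_\rho M_\tau/G$, and the standard category inequality then yields at least $\mathrm{cat}_{B_\rho M_\tau/G}(M_\tau/G)$ critical orbits of $J_\varepsilon$ on $\mathcal N_\varepsilon/\mathbb S^1$ in the appropriate sublevel, hence that many geometrically distinct solutions. The energy estimate \eqref{energy} follows because these solutions live in the sublevel $\{J_\varepsilon\le \varepsilon^3\ell_{G,V}E_1+\varepsilon^3\delta'\}$ and, via the Nehari/Pohozaev identities, the quantity $\tfrac14\int(\tfrac1{|x|}\ast|u|^2)|u|^2$ equals (a constant multiple of) the energy; tracking the powers of $\varepsilon$ through the rescaling gives the $\varepsilon^5$ scaling stated.

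The main obstacle will be the compactness/concentration analysis underlying the barycenter map $\beta_\varepsilon$: one must show that minimizing-type PS sequences for the $\tau$-equivariant problem do not split into several bumps sitting on distinct orbits and do not lose mass at infinity, and that the single surviving concentration orbit has isotropy contained in $\ker\tau$ and minimizes $(\#Gx)V^{3/2}(x)$ — all of this for the \emph{nonlocal} Hartree term, where the convolution couples the bumps and the usual Brezis--Lieb splitting must be carried out with care (the interaction energy between far-apart bumps decays like $|\text{dist}|^{-1}$, so it is negligible but must be controlled). Making the energy expansions uniform over $M_\tau$ without any regularity of $V$ — relying only on continuity and on $V$ being attained in the Lebesgue sense at concentration points — is the technically delicate point that replaces the usual non-degeneracy hypotheses.
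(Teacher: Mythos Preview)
Your proposal is correct and follows the same overall Benci--Cerami strategy as the paper: entrance map from $M_\tau$ into a low sublevel of the Nehari manifold, exit map back to a neighborhood of $M_\tau/G$, and the category sandwich. The entrance map you sketch (orbit-sum of phase-corrected cut-off ground states) is exactly the paper's $\psi_{\varepsilon,\xi}$.

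The one place where the paper's implementation differs from what you describe is the exit map. You propose a classical barycenter of $|u|^2$ defined directly on the magnetic Nehari manifold. The paper instead passes from $u\in\mathcal N_{\varepsilon,A,V}^\tau$ to $|u|$, which by the diamagnetic inequality lies (after radial projection) in the Nehari manifold of the \emph{real} $G$-invariant problem with a truncated potential $W=\min\{V,\lambda\}$; the concentration analysis (their Proposition~5.4) is carried out entirely for this auxiliary real problem, where the Ma--Zhao uniqueness of the positive ground state is available and $W$ is uniformly continuous. The exit map itself is not a center of mass but a ``local baryorbit map'': one assigns to $v$ the unique $G$-orbit $G\xi$ minimizing the $\varepsilon$-norm distance from $|v|$ to the family of model sums $\theta_{\varepsilon,\xi}=\sum_{g\xi}\omega_i((\cdot-g\xi)/\varepsilon)$. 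This sidesteps the issue you flag at the end---nonuniqueness of magnetic ground states and the delicate splitting of the convolution term---by reducing everything to the real scalar limit problem. Your direct approach could presumably be made to work, but the paper's detour through $|u|$ and $W$ is what makes the argument clean.
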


The last inequality says that the energy of the solutions is arbitrarily close
to $\varepsilon^{3}\ell_{G,V}E_{1}$ for $\varepsilon$ small enough. So
considering different groups $G$ and $G^{\prime}$ for which $\ell_{G,V}%
\neq\ell_{G^{\prime},V}$ will lead to solutions with energy in disjoint ranges.

For $u\in H^{1}(\mathbb{R}^{3},\mathbb{R})$ set
\[
\left\Vert u\right\Vert _{\varepsilon}^{2}:=\int_{\mathbb{R}^{3}}%
(\varepsilon^{2}\left\vert \nabla u\right\vert ^{2}+u^{2}).
\]
The following theorem describes the module of the solutions given by Theorem
\ref{mainthm1} as $\varepsilon\rightarrow0.$

\begin{theorem}
\label{concentrationthm}Let $u_{n}$ be a solution to problem \emph{(\ref{prob}%
)} which satisfies \emph{(\ref{tau-inv})} and \emph{(\ref{energy})} for
$\varepsilon=\varepsilon_{n}>0$, $\delta=\delta_{n}>0$. Assume $\varepsilon
_{n}\rightarrow0$ and $\delta_{n}\rightarrow0.$ Then, after passing to a
subsequence, there exists a sequence $(\xi_{n})$ in $\mathbb{R}^{3}$ such that
$\xi_{n}\rightarrow\xi\in M_{\tau}$, $G_{\xi_{n}}=G_{\xi},$ and%
\[
\varepsilon_{n}^{-3}\left\Vert \left\vert u_{n}\right\vert -%
{\textstyle\sum\limits_{g\xi_{n}\in G\xi_{n}}}
\omega_{\xi}\left(  \frac{\cdot-g\xi_{n}}{\varepsilon_{n}}\right)  \right\Vert
_{\varepsilon_{n}}^{2}\rightarrow0,
\]
where $\omega_{\xi}$ is the unique ground state of problem%
\[
-\Delta u+V(\xi)u=\left(  \frac{1}{\left\vert x\right\vert }\ast u^{2}\right)
u,\hspace{0.3in}u\in H^{1}(\mathbb{R}^{3},\mathbb{R}),
\]
which is positive and radially symmetric with respect to the origin.
\end{theorem}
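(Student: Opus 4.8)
The plan is to extract a concentration profile from the energy bound \eqref{energy} via a concentration-compactness type analysis adapted to the magnetic nonlocal functional. First I would rewrite the equation in the rescaled variable $x\mapsto\varepsilon_n x$, so that solutions $u_n$ of \eqref{prob} correspond to solutions $v_n(x):=u_n(\varepsilon_n x)$ of a problem with the nonlocal term no longer carrying the factor $\varepsilon_n^{-2}$ and with $\varepsilon_n^{-3}\|\,\cdot\,\|_{\varepsilon_n}^2$ becoming the standard $H^1$-norm squared of $v_n$. The hypothesis \eqref{energy} then says that the nonlocal energy $\frac14\int(\frac1{|x|}\ast|v_n|^2)|v_n|^2$ converges to $\ell_{G,V}E_1$, and since on critical points the full energy functional equals $\tfrac16$ of this quantity (by the Pohozaev/Nehari identity for the Hartree equation), the energy level of $v_n$ converges to $\tfrac16\ell_{G,V}E_1$. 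I expect that the machinery underlying Theorem \ref{mainthm1} already produces, at this energy level, exactly the "minimal" bubble configuration: $\ell_{G,V}=(\#G\xi)V^{3/2}(\xi)$ for $\xi\in M_\tau$, and a scaling argument relating $\omega_\xi$ (the ground state at the fixed level $V(\xi)$) to the ground state $\omega$ of \eqref{limit} shows the energy of one bubble $\omega_\xi$ equals $V(\xi)^{3/2}E_1$, so the total equals $(\#G\xi)$ copies of it.

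The core steps are then: (i) a splitting lemma — using the boundedness of $(v_n)$ in the magnetic Sobolev space together with the diamagnetic inequality (so that $|v_n|$ is bounded in $H^1(\mathbb R^3)$) and the Hardy–Littlewood–Sobolev inequality to control the nonlocal term, decompose $v_n$ as a finite sum of translated profiles plus a vanishing remainder; (ii) identify each profile, up to a phase, as a (possibly shifted) solution of a limit equation $-\Delta w+V(\xi)w=(\frac1{|x|}\ast w^2)w$ for some limiting value $\xi$ of the concentration points; (iii) use the symmetry constraint \eqref{tau-inv}: each concentration point must generate a whole $G$-orbit of bubbles, and the condition $G_\xi\subset\ker\tau$ forces the phases on the orbit to be consistent, so the $G$-orbit contributes $(\#G\xi)$ bubbles, each of energy $V(\xi)^{3/2}E_1$; (iv) a counting/optimization argument: if there were more than one orbit of bubbles, or a single orbit over a point $\xi$ with $(\#G\xi)V^{3/2}(\xi)>\ell_{G,V}$, the total energy would exceed $\ell_{G,V}E_1$, contradicting \eqref{energy}; hence there is exactly one orbit, $\xi\in M_\tau$, and no residual mass. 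Then $\xi_n$ is the concentration point of the bubble, $\xi_n\to\xi$, and by passing to a subsequence one arranges $G_{\xi_n}=G_\xi$ (the isotropy can only jump up in the limit, but equality holds along a subsequence since the orbit cardinality $\#G\xi_n$ is an integer converging to $\#G\xi$). Finally, uniqueness and nondegeneracy of the ground state $\omega_\xi$ (classical results of Lieb and of Lenzmann on the Choquard/Hartree ground state) upgrade weak convergence of $|v_n|$ to the bubble sum into strong $H^1$-convergence, which is precisely the stated conclusion $\varepsilon_n^{-3}\big\||u_n|-\sum_{g\xi_n\in G\xi_n}\omega_\xi(\tfrac{\cdot-g\xi_n}{\varepsilon_n})\big\|_{\varepsilon_n}^2\to0$.

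The main obstacle will be step (iii): controlling the interaction between the phases of $u_n$ on different points of a $G$-orbit and the nonlocal, translation-invariant nature of the convolution term. One must show that the bubbles sit exactly on $G$-orbits (not on proper sub-collections forced by the equivariance $u_n(gx)=\tau(g)u_n(x)$), that distinct bubbles separate at a rate $\gg\varepsilon_n$ so that their nonlocal cross-interactions are negligible, and that the limiting profile is genuinely the real, positive, radial ground state $\omega_\xi$ rather than a complex solution of the magnetic limit problem — here one uses that after rescaling the magnetic potential $A(\varepsilon_n x)$ converges to the constant $A(\xi)$, which can be gauged away, reducing the limit equation to the nonmagnetic one and pinning down $\omega_\xi$ via uniqueness of the ground state.
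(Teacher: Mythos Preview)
Your overall strategy is viable but takes a harder road than the paper, and there is one factual slip. On the Nehari manifold for this problem one has $\|u\|_{\varepsilon,A,V}^{2}=\varepsilon^{-2}\mathbb D(u)$, hence $J_{\varepsilon,A,V}(u)=\tfrac{1}{4\varepsilon^{2}}\mathbb D(u)$, not $\tfrac16$ of it; no Pohozaev identity enters.

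More substantively, the paper does \emph{not} run concentration--compactness on the complex-valued magnetic $\tau$-equivariant problem. It passes immediately to the modulus $|u_n|$, which is real and $G$-invariant, and uses the diamagnetic inequality together with $W\le V$ to get
\[
c_{\varepsilon_n,W}^{G}\le J_{\varepsilon_n,W}\bigl(\pi_{\varepsilon_n,W}(|u_n|)\bigr)\le J_{\varepsilon_n,A,V}(u_n)\le \varepsilon_n^{3}\bigl(\ell_{G,V}E_1+\delta_n\bigr),
\]
so $\pi_{\varepsilon_n,W}(|u_n|)$ is an almost-minimizer of the \emph{real} functional $J_{\varepsilon_n,W}$ on $\mathcal M_{\varepsilon_n,W}^{G}$. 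Since $|u_n|$ is not a critical point of anything, the paper then invokes Ekeland's variational principle to replace $\pi_{\varepsilon_n,W}(|u_n|)$ by a nearby Palais--Smale sequence $v_n$, and applies the concentration result already proved for the real $G$-invariant problem (Proposition~\ref{epsilonPS}). A final short computation, using the energy pinching, shows $|u_n|$ is close to its own radial projection, which closes the loop.

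Your route---splitting $u_n$ directly, identifying complex profiles, gauging away the constant limit of $A$, then taking moduli---can be pushed through, but it forces you to redo the splitting lemma in the magnetic setting and to manage the phase bookkeeping of step~(iii) explicitly. The paper's detour through $|u_n|$ sidesteps all of that: the magnetic potential and the homomorphism $\tau$ vanish the instant one takes absolute values, and the price (loss of criticality) is paid cheaply with Ekeland. Also, you do not need nondegeneracy of $\omega_\xi$; uniqueness (Lieb, Ma--Zhao) suffices, since the energy pinching already upgrades the convergence to strong convergence.
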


Next, we give an example which illustrates our results.

\subsection{Rotationally invariant potentials}

\label{example}Let $\mathbb{S}^{1}$ act on $\mathbb{R}^{3}\equiv
\mathbb{C}\times\mathbb{R}$ by $e^{\mathrm{i}\theta}(z,t):=(e^{\mathrm{i}%
\theta}z,t),$ and let $A$ and $V$ satisfy assumption (\ref{G}) for the cyclic
group $G_{m}$ generated by $e^{2\pi\mathrm{i}/m},$ for some $m\in\mathbb{N}$.
For example, the standard magnetic potential $A(x_{1},x_{2},x_{3}%
):=(-x_{2},x_{1},0)$ associated to the constant magnetic field $B(x)=(0,0,2)$
has this property for every $m$.

For each $j=0,1,\ldots,m-1$ we look for solutions to problem (\ref{prob})
which satisfy%
\begin{equation}
u(e^{2\pi\mathrm{i}/m}z,t)=e^{2\pi\mathrm{i}j/m}u(z,t)\hspace{0.2in}\text{for
all \ }(z,t)\in\mathbb{C}\times\mathbb{R}. \label{symex}%
\end{equation}
Solutions of this type arise in a natural way in some problems where the
magnetic potential is singular and the topology of the domain produces an
Aharonov-Bohm type effect, cf. \cite{at, cs}. Taking $\tau_{j}(g):=g^{j}$ we
see that these are solutions of the type furnished by Theorem \ref{mainthm1}.

If $V$ satisfies
\begin{equation}
V_{0}:=\inf_{x\in\mathbb{R}^{3}}V<\liminf_{\left\vert x\right\vert
\rightarrow\infty}V(x)\text{ \ \ \ \ and \ \ \ }mV_{0}^{3/2}<\inf
_{t\in\mathbb{R}}V^{3/2}(0,t), \label{V}%
\end{equation}
then assumption (\ref{comp}) in Theorem \ref{mainthm1} is satisfied,
$\ell_{G_{m},V}=mV_{0}^{3/2}$ and $M_{\tau}$ is simply the set of minima of
$V,$%
\[
M=\left\{  x\in\mathbb{R}^{3}:V(x)=V_{0}\right\}  .
\]
Thus, for each $j=0,1,\ldots,m-1$ and $\rho$, $\delta>0$, Theorem
\ref{mainthm1}\ yields at least \ cat$_{B_{\rho}M/G_{m}}(M/G_{m})$
\ geometrically distinct solutions to problem (\ref{prob}) satisfying
(\ref{symex}) and (\ref{energy}), for $\varepsilon$ small enough.

For each $k$ dividing $m$ the potentials $A$ and $V$ satify assumption
(\ref{G}) for $G_{k}$ and $V$ satisfies (\ref{V})$\ $with $k$ instead of $m$.
Property (\ref{energy}) implies that the solutions obtained for $G_{k}$ are
different from those for $G_{m}$ if $k\neq m$ and $\varepsilon$ is small enough.

\bigskip

This paper is organized as follows. In section \ref{secvarprob} we discuss the
variational problem related to the existence of solutions to problem
(\ref{prob}) satisfying (\ref{tau-inv}). We also outline the strategy for
proving Theorem \ref{mainthm1}. Sections \ref{embedding} and \ref{baryorbit}
are devoted to the construction of an \emph{entrance map} and a \emph{local
baryorbit map} which will help us estimate the Lusternik-Schnirelmann category
of a suitable sublevel set of the variational functional for $\varepsilon$
small enough. Finally, in section \ref{secproofs} we prove Theorems
\ref{mainthm1} and \ref{concentrationthm}.

\section{The variational problem}

\label{secvarprob}Set $\nabla_{\varepsilon,A}u:=\varepsilon\nabla
u+\mathrm{i}Au$ and consider the real Hilbert space%
\[
H_{\varepsilon,A}^{1}(\mathbb{R}^{3},\mathbb{C}):=\{u\in L^{2}(\mathbb{R}%
^{3},\mathbb{C}):\nabla_{\varepsilon,A}u\in L^{2}(\mathbb{R}^{3}%
,\mathbb{C}^{3})\}
\]
with the scalar product
\begin{equation}
\left\langle u,v\right\rangle _{\varepsilon,A,V}:=\operatorname{Re}%
\int_{\mathbb{R}^{3}}\left(  \nabla_{\varepsilon,A}u\cdot\overline
{\nabla_{\varepsilon,A}v}+V(x)u\overline{v}\right)  . \label{sp}%
\end{equation}
We write
\[
\left\Vert u\right\Vert _{\varepsilon,A,V}:=\left(  \int_{\mathbb{R}^{3}%
}\left(  \left\vert \nabla_{\varepsilon,A}u\right\vert ^{2}+V(x)\left\vert
u\right\vert ^{2}\right)  \right)  ^{1/2}%
\]
for the corresponding norm.

If $u\in H_{\varepsilon,A}^{1}(\mathbb{R}^{3},\mathbb{C}),$ then $\left\vert
u\right\vert \in H^{1}(\mathbb{R}^{3},\mathbb{R})$ and%
\begin{equation}
\varepsilon\left\vert \nabla|u(x)|\right\vert \leq\left\vert \varepsilon\nabla
u(x)+\mathrm{i}A(x)u(x)\right\vert \text{\quad for a.e. $x\in\mathbb{R}^{3}$}.
\label{di}%
\end{equation}
This is called the diamagnetic inequality \cite{ll}. Set
\[
\mathbb{D}(u):=\int_{\mathbb{R}^{3}}\int_{\mathbb{R}^{3}}\frac{|u(x)|^{2}%
|u(y)|^{2}}{\left\vert x-y\right\vert }\,dxdy.
\]
The standard Hardy--Littlewood--Sobolev inequality \cite[Theorem 4.3]{ll}
yields%
\begin{equation}
\left\vert \int_{\mathbb{R}^{3}}\!\int_{\mathbb{R}^{3}}\frac{f(x)h(y)}%
{|x-y|}\,dx\,dy\right\vert \leq C\Vert f\Vert_{L^{6/5}(\mathbb{R}^{3})}\Vert
h\Vert_{L^{6/5}(\mathbb{R}^{3})} \label{eq:HLS}%
\end{equation}
for all $f,h\in L^{6/5}(\mathbb{R}^{3}),$ where $C$ is a positive constant
independent of $f$ and $h$. In particular,%
\begin{equation}
\mathbb{D}(u)\leq C\Vert u\Vert_{L^{12/5}(\mathbb{R}^{3})}^{4} \label{D}%
\end{equation}
for every $u\in H_{\varepsilon,A}^{1}(\mathbb{R}^{3},\mathbb{C}).$

The energy functional $J_{\varepsilon,A,V}:H_{\varepsilon,A}^{1}%
(\mathbb{R}^{3},\mathbb{C})\rightarrow\mathbb{R}$ associated to problem
(\ref{prob}), defined by
\[
J_{\varepsilon,A,V}(u):=\frac{1}{2}\left\Vert u\right\Vert _{\varepsilon
,A,V}^{2}-\frac{1}{4\varepsilon^{2}}\mathbb{D}(u),
\]
is of class $C^{2}$, and its derivative is given by%
\[
J_{\varepsilon,A,V}^{\prime}(u)v:=\left\langle u,v\right\rangle _{\varepsilon
,A,V}-\frac{1}{\varepsilon^{2}}\operatorname{Re}\int_{\mathbb{R}^{3}}\left(
\frac{1}{\left\vert x\right\vert }\ast\left\vert u\right\vert ^{2}\right)
u\overline{v}.
\]
Therefore, the solutions to problem (\ref{prob}) are the critical points of
$J_{\varepsilon,A,V}.$ We write $\nabla_{\varepsilon}J_{\varepsilon,A,V}(u)$
for the gradient of $J_{\varepsilon,A,V}$ at $u$ with respect to the scalar
product (\ref{sp}).

The action of $G$ on $H_{\varepsilon,A}^{1}(\mathbb{R}^{3},\mathbb{C})$
defined by $(g,u)\mapsto u_{g},$ where%
\[
(u_{g})(x):=\tau(g)u(g^{-1}x),
\]
satisfies%
\[
\left\langle u_{g},v_{g}\right\rangle _{\varepsilon,A,V}=\left\langle
u,v\right\rangle _{\varepsilon,A,V}\text{ \ \ \ and \ \ \ }\mathbb{D}%
(u_{g})=\mathbb{D}(u)
\]
for all $g\in G,$ $u,v\in H_{\varepsilon,A}^{1}(\mathbb{R}^{3},\mathbb{C}).$
Hence, $J_{\varepsilon,A,V}$ is $G$-invariant. By the principle of symmetric
criticality \cite{p, w}, the critical points of the restriction of
$J_{\varepsilon,A,V}$ to the fixed point space of this $G$-action, denoted by
\begin{align*}
H_{\varepsilon,A}^{1}(\mathbb{R}^{3},\mathbb{C})^{\tau}  &  =\left\{  u\in
H_{\varepsilon,A}^{1}(\mathbb{R}^{3},\mathbb{C}):u_{g}=u\right\} \\
&  =\left\{  u\in H_{\varepsilon,A}^{1}(\mathbb{R}^{3},\mathbb{C}%
):u(gx)=\tau(g)u(x)\text{ \ }\forall x\in\mathbb{R}^{3},\text{ }g\in
G\right\}  ,
\end{align*}
are the solutions to problem (\ref{prob}) which satisfy (\ref{tau-inv}). Those
which are nontrivial lie on the \emph{Nehari manifold}%
\[
\mathcal{N}_{\varepsilon,A,V}^{\tau}:=\left\{  u\in H_{\varepsilon,A}%
^{1}(\mathbb{R}^{3},\mathbb{C})^{\tau}:u\neq0,\ \varepsilon^{2}\left\Vert
u\right\Vert _{\varepsilon,A,V}^{2}=\mathbb{D}(u)\right\}  ,
\]
which is a $C^{2}$-manifold radially diffeomorphic to the unit sphere in
$H_{\varepsilon,A}^{1}(\mathbb{R}^{3},\mathbb{C})^{\tau}.$ The critical points
of the restriction of $J_{\varepsilon,A,V}$ to $\mathcal{N}_{\varepsilon
,A,V}^{\tau}$ are precisely the nontrivial solutions to (\ref{prob}) which
satisfy (\ref{tau-inv}).

The radial projection $\pi_{\varepsilon,A,V}:H_{\varepsilon,A}^{1}%
(\mathbb{R}^{3},\mathbb{C})^{\tau}\setminus\{0\}\rightarrow\mathcal{N}%
_{\varepsilon,A,V}^{\tau}$ is given by%
\begin{equation}
\pi_{\varepsilon,A,V}(u):=\frac{\varepsilon\left\Vert u\right\Vert
_{\varepsilon,A,V}}{\sqrt{\mathbb{D}(u)}}u. \label{radproj}%
\end{equation}
Note that%
\begin{equation}
J_{\varepsilon,A,V}(\pi_{\varepsilon,A,V}(u))=\frac{\varepsilon^{2}\left\Vert
u\right\Vert _{\varepsilon,A,V}^{4}}{4\mathbb{D}(u)}\text{ \ \ \ \ for all
}u\in H_{\varepsilon,A}^{1}(\mathbb{R}^{3},\mathbb{C})^{\tau}\setminus\{0\}.
\label{JpiA}%
\end{equation}

Recall that $J_{\varepsilon,A,V}:\mathcal{N}_{\varepsilon,A,V}^{\tau
}\rightarrow\mathbb{R}$ is said to satisfy the \emph{Palais-Smale condition}
$(PS)_{c}$ at the level $c$ if every sequence $(u_{n})$ such that%
\[
u_{n}\in\mathcal{N}_{\varepsilon,A,V}^{\tau},\text{ \ \ \ \ }J_{\varepsilon
,A,V}(u_{n})\rightarrow c,\text{ \ \ \ \ }\nabla_{\mathcal{N}_{\varepsilon
,A,V}^{\tau}}J_{\varepsilon,A,V}(u_{n})\rightarrow0,
\]
contains a convergent subsequence. Here $\nabla_{\mathcal{N}_{\varepsilon
,A,V}^{\tau}}J_{\varepsilon,A,V}(u)$ denotes the orthogonal projection of
$\nabla_{\varepsilon}J_{\varepsilon,A,V}(u)$ onto the tangent space to
$\mathcal{N}_{\varepsilon,A,V}^{\tau}$ at $u$. The following holds.

\begin{proposition}
\label{palaissmale} For every $\varepsilon>0$, the functional $J_{\varepsilon
,A,V}:\mathcal{N}_{\varepsilon,A,V}^{\tau}\rightarrow\mathbb{R}$ satisfies
$(PS)_{c}$ at each level
\[
c<\varepsilon^{3}\min_{x\in\mathbb{R}^{3}\setminus\{0\}}(\#Gx)V_{\infty}%
^{3/2}E_{1},
\]
where $V_{\infty}:=\liminf_{\left\vert x\right\vert \rightarrow\infty}V(x).$
\end{proposition}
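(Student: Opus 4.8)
The plan is to prove a compactness result for Palais--Smale sequences on the Nehari manifold $\mathcal{N}_{\varepsilon,A,V}^{\tau}$ by the standard concentration-compactness / Lions-type argument, carefully tracking the constraint imposed by the symmetry $\tau$. First I would note that for fixed $\varepsilon>0$ a $(PS)_c$ sequence $(u_n)$ with $u_n\in\mathcal{N}_{\varepsilon,A,V}^{\tau}$ is bounded in $H^1_{\varepsilon,A}(\mathbb{R}^3,\mathbb{C})$: on the Nehari manifold one has $J_{\varepsilon,A,V}(u_n)=\tfrac14\|u_n\|_{\varepsilon,A,V}^2$, so $J_{\varepsilon,A,V}(u_n)\to c$ forces $\|u_n\|_{\varepsilon,A,V}$ to be bounded. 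Hence, up to a subsequence, $u_n\rightharpoonup u$ weakly in $H^1_{\varepsilon,A}$ and a.e., and the weak limit $u$ automatically lies in the fixed-point space $H^1_{\varepsilon,A}(\mathbb{R}^3,\mathbb{C})^{\tau}$ since that space is closed and $G$-invariant. A routine computation using the Hardy--Littlewood--Sobolev inequality~\eqref{eq:HLS}, the compact local Sobolev embeddings, and the Brezis--Lieb lemma for the nonlocal term $\mathbb{D}$ shows that $u$ is a (possibly trivial) critical point of $J_{\varepsilon,A,V}$ on $H^1_{\varepsilon,A}(\mathbb{R}^3,\mathbb{C})^{\tau}$, and that the ``defect'' $v_n:=u_n-u$ satisfies $J_{\varepsilon,A,V}(v_n)\to c-J_{\varepsilon,A,V}(u)$ and $\nabla_{\varepsilon}J_{\varepsilon,A,V}(v_n)\to 0$; in particular $\varepsilon^2\|v_n\|_{\varepsilon,A,V}^2-\mathbb{D}(v_n)\to 0$.

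The heart of the argument is then to show that the remainder $v_n$ cannot carry a nonzero amount of energy below the stated threshold. If $v_n\not\to 0$, then by the almost-Nehari identity $\mathbb{D}(v_n)$ is bounded away from $0$, and by Lions' vanishing lemma there exist $\delta>0$ and points $y_n\in\mathbb{R}^3$ with $\int_{B_1(y_n)}|v_n|^2\geq\delta$; necessarily $|y_n|\to\infty$ (otherwise the mass would survive in the weak limit, contradicting $v_n\rightharpoonup 0$). Because $|v_n|$ is $G$-invariant by~\eqref{tau-inv}, the same concentration of mass occurs simultaneously around every point of the orbit $Gy_n$, so $v_n$ actually carries at least $\#Gy_n$ bumps that become mutually far apart; and since $|y_n|\to\infty$, the relevant orbit sizes are at least $\min_{x\neq 0}\#Gx$. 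Translating each bump, rescaling by $\varepsilon$, and using $V(y)\to V_\infty$ as $|y|\to\infty$ together with the diamagnetic inequality~\eqref{di} (which lets one pass from the magnetic functional to the non-magnetic limit problem~\eqref{limit}, whose least energy is $E_1$), each bump contributes in the limit at least $\varepsilon^3 V_\infty^{3/2}E_1$ to $J_{\varepsilon,A,V}(v_n)$, so $c-J_{\varepsilon,A,V}(u)\geq\varepsilon^3(\min_{x\neq 0}\#Gx)V_\infty^{3/2}E_1$. Since $J_{\varepsilon,A,V}(u)\geq 0$, this contradicts the hypothesis on $c$; hence $v_n\to 0$, i.e. $u_n\to u$ strongly, and one checks the limit lies on $\mathcal{N}_{\varepsilon,A,V}^{\tau}$ (it is nontrivial because the Nehari level is bounded below by a positive constant).

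There is one technical point to dispose of: the $(PS)$ condition in the statement is formulated with the \emph{constrained} gradient $\nabla_{\mathcal{N}}J_{\varepsilon,A,V}(u_n)\to 0$ on the Nehari manifold, whereas the argument above is cleanest with the free gradient. I would first show, via the usual Lagrange-multiplier computation on the $C^2$ constraint $\varepsilon^2\|u\|_{\varepsilon,A,V}^2=\mathbb{D}(u)$, that for a bounded sequence on $\mathcal{N}_{\varepsilon,A,V}^{\tau}$ the conditions ``$\nabla_{\mathcal{N}}J\to 0$'' and ``$\nabla_{\varepsilon}J\to 0$'' are equivalent, using that on the Nehari manifold the quantity $\langle\nabla_{\varepsilon}J_{\varepsilon,A,V}(u),u\rangle_{\varepsilon,A,V}$ controls the multiplier and is bounded away from degeneracy (the derivative of the Nehari constraint in the radial direction is nonzero and bounded below on bounded subsets away from $0$). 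The main obstacle is the second paragraph: correctly counting the orbit multiplicity of the escaping bumps and ensuring that bumps sitting at distinct orbit points are asymptotically decoupled (so their energies add), which requires a careful localization argument with $G$-invariant cutoffs and a bound on how the bumps in a single orbit can cluster; once the splitting $J(v_n)\to\sum(\text{bump energies})$ is established, comparison with the limit problem~\eqref{limit} and the definition of $E_1$ finishes the proof.
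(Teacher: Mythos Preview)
Your sketch is correct in outline and is essentially the concentration--compactness argument that underlies the result, but it is \emph{not} the route the paper takes. The paper's own proof is a two-line reduction: it cites \cite{ccs2} for the case $\varepsilon=1$ and then observes that the change of variable $u_{\varepsilon}(x):=u(\varepsilon x)$ yields
\[
\varepsilon^{-3}J_{\varepsilon,A,V}(u)=J_{1,A_{\varepsilon},V_{\varepsilon}}(u_{\varepsilon}),\qquad
\varepsilon^{-3/2}\nabla_{\mathcal{N}_{\varepsilon,A,V}^{\tau}}J_{\varepsilon,A,V}(u)=\nabla_{\mathcal{N}_{1,A_{\varepsilon},V_{\varepsilon}}^{\tau}}J_{1,A_{\varepsilon},V_{\varepsilon}}(u_{\varepsilon}),
\]
with $A_{\varepsilon}(x)=A(\varepsilon x)$, $V_{\varepsilon}(x)=V(\varepsilon x)$, so a $(PS)_c$ sequence for $J_{\varepsilon,A,V}$ becomes a $(PS)_{\varepsilon^{-3}c}$ sequence for $J_{1,A_{\varepsilon},V_{\varepsilon}}$, and the threshold scales accordingly. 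All the analytic content is deferred to the reference.

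What you wrote is, in effect, a blueprint for the proof in \cite{ccs2}. It is self-contained, which is a genuine advantage, and your identification of the delicate point is accurate: the orbit-counting step requires knowing that the concentration centers $gy_n$, $g\in G$, can be arranged so that distinct orbit representatives are mutually diverging (so the bumps decouple and their energies add). This does not come for free from $|y_n|\to\infty$ alone when $G$ is infinite or when isotropy varies along the sequence. The paper handles exactly this issue elsewhere (Lemma~\ref{lemG}, used in Proposition~\ref{epsilonPS}): one replaces $(y_n)$ by a nearby sequence $(\zeta_n)$ with constant isotropy $\Gamma$ and with $|g\zeta_n-\tilde g\zeta_n|\to\infty$ whenever $\tilde g g^{-1}\notin\Gamma$ (or $\notin\Gamma'$ in the infinite-index case). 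If you pursue your direct route, you will need that lemma or an equivalent device; once it is in place, your energy lower bound $c\geq J_{\varepsilon,A,V}(u)+\varepsilon^{3}\bigl(\min_{x\neq 0}\#Gx\bigr)V_{\infty}^{3/2}E_{1}$ goes through and the contradiction closes the argument.
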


\begin{proof}
This was proved in \cite{ccs2} for $\varepsilon=1.$ For $\varepsilon>0$ the
assertion follows after performing the change of variable $u_{\varepsilon
}(x):=u(\varepsilon x)$ since a straightforward computation shows that
\[
\varepsilon^{-3}J_{\varepsilon,A,V}(u)=J_{1,A_{\varepsilon},V_{\varepsilon}%
}(u_{\varepsilon})\text{ \ \ and \ \ }\varepsilon^{-3/2}\nabla_{\mathcal{N}%
_{\varepsilon,A,V}^{\tau}}J_{\varepsilon,A,V}(u)=\nabla_{\mathcal{N}%
_{1,A_{\varepsilon},V_{\varepsilon}}^{\tau}}J_{1,A_{\varepsilon}%
,V_{\varepsilon}}(u_{\varepsilon}),
\]
where $A_{\varepsilon}(x):=A(\varepsilon x)$ and $V_{\varepsilon
}(x):=V(\varepsilon x).$
\end{proof}

$\mathbb{S}^{1}$ acts on $H_{\varepsilon,A}^{1}(\mathbb{R}^{3},\mathbb{C}%
)^{\tau}$ by scalar multiplication: $(e^{\mathrm{i}\theta},u)\mapsto
e^{\mathrm{i}\theta}u.$ The Nehari manifold $\mathcal{N}_{\varepsilon
,A,V}^{\tau}$ and the functional $J_{\varepsilon,A,V}$ are invariant under
this action. Two solutions of (\ref{prob}) are geometrically distinct iff they
lie on different $\mathbb{S}^{1}$-orbits. Equivariant Lusternik-Schnirelmann
theory yields the following result, see e.g. \cite{cp}.

\begin{proposition}
\label{ls}If $J_{\varepsilon,A,V}:\mathcal{N}_{\varepsilon,A,V}^{\tau
}\rightarrow\mathbb{R}$ satisfies $(PS)_{c}$ at each level $c\leq\overline
{c},$ then $J_{\varepsilon,A,V}$ has at least%
\[
\text{\emph{cat}}\left[  \left(  \mathcal{N}_{\varepsilon,A,V}^{\tau}\cap
J_{\varepsilon,A,V}^{\overline{c}}\right)  /\mathbb{S}^{1}\right]
\]
critical $\mathbb{S}^{1}$-orbits in $\mathcal{N}_{\varepsilon,A,V}^{\tau}\cap
J_{\varepsilon,A,V}^{\overline{c}}$.
\end{proposition}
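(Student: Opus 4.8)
The plan is to reduce the statement to the classical Lusternik--Schnirelmann theorem on a complete $C^1$-Finsler manifold, applied to the quotient of the Nehari manifold by the $\mathbb{S}^1$-action. First I would observe that the action $e^{\mathrm{i}\theta}\cdot u=e^{\mathrm{i}\theta}u$ on $\mathcal{N}_{\varepsilon,A,V}^{\tau}$ is free: the only fixed point of scalar multiplication by $e^{\mathrm{i}\theta}\neq1$ is $0$, and $0\notin\mathcal{N}_{\varepsilon,A,V}^{\tau}$. Since $\mathbb{S}^{1}$ is compact the action is also proper, so the orbit space $\mathcal{M}:=\mathcal{N}_{\varepsilon,A,V}^{\tau}/\mathbb{S}^{1}$ is a $C^{2}$ Hilbert manifold and the projection $p:\mathcal{N}_{\varepsilon,A,V}^{\tau}\to\mathcal{M}$ is a $C^{2}$ principal $\mathbb{S}^{1}$-bundle. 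Because $J_{\varepsilon,A,V}$ is $\mathbb{S}^{1}$-invariant it factors as $J_{\varepsilon,A,V}=\widehat{J}\circ p$ for a $C^{1}$ functional $\widehat{J}:\mathcal{M}\to\mathbb{R}$, and since $p$ is a surjective submersion, $p(u)$ is a critical point of $\widehat{J}$ if and only if $u$ is a critical point of the restriction of $J_{\varepsilon,A,V}$ to $\mathcal{N}_{\varepsilon,A,V}^{\tau}$; thus the critical points of $\widehat{J}$ correspond bijectively to the critical $\mathbb{S}^{1}$-orbits of $J_{\varepsilon,A,V}$ on $\mathcal{N}_{\varepsilon,A,V}^{\tau}$.

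Next I would verify the hypotheses needed to run a deformation argument on $\mathcal{M}$. The Nehari manifold $\mathcal{N}_{\varepsilon,A,V}^{\tau}$ is radially diffeomorphic to the unit sphere of $H_{\varepsilon,A}^{1}(\mathbb{R}^{3},\mathbb{C})^{\tau}$ and is therefore a complete $C^{2}$-Finsler manifold, and hence so is $\mathcal{M}$ with the induced metric. The functional $\widehat{J}$ is bounded below by a positive constant: on $\mathcal{N}_{\varepsilon,A,V}^{\tau}$ one has $\varepsilon^{2}\Vert u\Vert_{\varepsilon,A,V}^{2}=\mathbb{D}(u)$, so $J_{\varepsilon,A,V}(u)=\tfrac14\Vert u\Vert_{\varepsilon,A,V}^{2}$, while \eqref{D} together with the Sobolev embedding and the diamagnetic inequality \eqref{di} gives $\varepsilon^{2}\Vert u\Vert_{\varepsilon,A,V}^{2}=\mathbb{D}(u)\leq C\Vert u\Vert_{L^{12/5}(\mathbb{R}^{3})}^{4}\leq C_{\varepsilon}\Vert u\Vert_{\varepsilon,A,V}^{4}$, whence $\Vert u\Vert_{\varepsilon,A,V}^{2}\geq c_{0}>0$ on $\mathcal{N}_{\varepsilon,A,V}^{\tau}$ and $\inf_{\mathcal{M}}\widehat{J}\geq\tfrac14 c_{0}>0$. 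Finally, the Palais--Smale condition transfers from $J_{\varepsilon,A,V}$ to $\widehat{J}$: any $(PS)_{c}$-sequence for $\widehat{J}$ lifts along $p$ to a sequence in $\mathcal{N}_{\varepsilon,A,V}^{\tau}$ on which $J_{\varepsilon,A,V}\to c$ and $\nabla_{\mathcal{N}_{\varepsilon,A,V}^{\tau}}J_{\varepsilon,A,V}\to0$; by Proposition \ref{palaissmale}, since $c\leq\overline{c}$, a subsequence converges, and its image converges for $\widehat{J}$. Thus $\widehat{J}:\mathcal{M}\to\mathbb{R}$ is a $C^{1}$, bounded-below functional on a complete $C^{1}$-Finsler manifold satisfying $(PS)_{c}$ at every level $c\leq\overline{c}$.

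Now I would invoke the classical Lusternik--Schnirelmann theorem in the form valid on complete $C^{1}$-Finsler manifolds: under the above hypotheses $\widehat{J}$ possesses at least $\mathrm{cat}_{X}(X)$ critical points in the sublevel set $X:=\{\widehat{J}\leq\overline{c}\}$, by the standard argument of producing, via the $(PS)_{c}$ deformation lemma, a deformation of $X$ into $\{\widehat{J}\leq c-\delta\}$ whenever $c\leq\overline{c}$ is a regular value, and then running an inductive covering argument using the subadditivity and continuity of category. Since $p$ is a principal $\mathbb{S}^{1}$-bundle we have $X=(\mathcal{N}_{\varepsilon,A,V}^{\tau}\cap J_{\varepsilon,A,V}^{\overline{c}})/\mathbb{S}^{1}$, and each critical point of $\widehat{J}$ in $X$ pulls back to a critical $\mathbb{S}^{1}$-orbit of $J_{\varepsilon,A,V}$ contained in $\mathcal{N}_{\varepsilon,A,V}^{\tau}\cap J_{\varepsilon,A,V}^{\overline{c}}$, which gives the asserted lower bound.

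The step that I expect to require the most care is the passage to the quotient together with the transfer of the deformation lemma, that is, constructing an $\mathbb{S}^{1}$-equivariant pseudo-gradient vector field on $\mathcal{N}_{\varepsilon,A,V}^{\tau}$ (equivalently, a pseudo-gradient field for $\widehat{J}$ on $\mathcal{M}$) and checking that $\mathcal{N}_{\varepsilon,A,V}^{\tau}$, hence $\mathcal{M}$, is genuinely complete so that the associated negative flow exists for all positive time on sublevel sets. Once this abstract framework is in place, the remainder is routine equivariant critical point theory, as in \cite{cp}.
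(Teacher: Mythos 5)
Your argument is correct and is essentially the standard proof of the result that the paper simply quotes from equivariant Lusternik--Schnirelmann theory \cite{cp} without giving any argument: since the $\mathbb{S}^{1}$-action is free ($0\notin\mathcal{N}_{\varepsilon,A,V}^{\tau}$), the equivariant category coincides with the category of the orbit space, and your passage to the quotient reduces the statement to classical Lusternik--Schnirelmann theory on a complete Finsler manifold, exactly as the cited reference does in the free case. Nothing further is needed.
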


Here $(\mathcal{N}_{\varepsilon,A,V}^{\tau}\cap J_{\varepsilon,A,V}%
^{\overline{c}})/\mathbb{S}^{1}$ denotes the $\mathbb{S}^{1}$-orbit space of
$\mathcal{N}_{\varepsilon,A,V}^{\tau}\cap J_{\varepsilon,A,V}^{\overline{c}}$,
where, as usual, $J_{\varepsilon,A,V}^{c}:=\left\{  u\in H_{\varepsilon,A}%
^{1}(\mathbb{R}^{3},\mathbb{C}):J_{\varepsilon,A,V}(u)\leq c\right\}
$.\medskip

To prove Theorem \ref{mainthm1} we will show that%
\begin{equation}
\text{cat}_{B_{\rho}M_{\tau}/G}M_{\tau}/G\leq\text{ cat}\left[  \left(
\mathcal{N}_{\varepsilon,A,V}^{\tau}\cap J_{\varepsilon,A,V}^{d}\right)
/\mathbb{S}^{1}\right]  \label{ineqcat}%
\end{equation}
for some $d=d(\varepsilon)\in(c_{\varepsilon,A,V}^{\tau},\varepsilon^{3}%
\min_{x\in\mathbb{R}^{3}\setminus\{0\}}(\#Gx)V_{\infty}^{3/2}E_{1}),$ where%
\begin{equation}
c_{\varepsilon,A,V}^{\tau}:=\inf_{\mathcal{N}_{\varepsilon,A,V}^{\tau}%
}J_{\varepsilon,A,V}. \label{tauinf}%
\end{equation}
To obtain inequality (\ref{ineqcat}) we shall construct maps
\[
M_{\tau}/G\overset{\iota_{\varepsilon}}{\longrightarrow}\mathcal{C}%
/\mathbb{S}^{1}\overset{\beta_{\varepsilon}}{\longrightarrow}B_{\rho}M_{\tau
}/G,
\]
whose composition is the inclusion $M_{\tau}/G\hookrightarrow B_{\rho}M_{\tau
}/G,$ where $\mathcal{C}$ is a union of connected components of $\mathcal{N}%
_{\varepsilon,A,V}^{\tau}\cap J_{\varepsilon,A,V}^{d}.$ A standard argument
then yields%
\[
\text{cat}_{B_{\rho}M_{\tau}/G}M_{\tau}/G\leq\text{cat}\left(  \mathcal{C}%
/\mathbb{S}^{1}\right)  \leq\text{cat}\left[  \left(  \mathcal{N}%
_{\varepsilon,A,V}^{\tau}\cap J_{\varepsilon,A,V}^{d}\right)  /\mathbb{S}%
^{1}\right]  .
\]
The main ingredients for defining these maps are contained in the following
two sections.

\section{The entrance map}

\label{embedding}For any positive real number $\lambda$ we consider the
problem
\begin{equation}%
\begin{cases}
-\Delta u+\lambda u=(\frac{1}{\left\vert x\right\vert }\ast u^{2})u,\\
u\in H^{1}(\mathbb{R}^{3},\mathbb{R}).
\end{cases}
\label{limlambda}%
\end{equation}
Its associated energy functional $J_{\lambda}\colon H^{1}(\mathbb{R}%
^{3},\mathbb{R})\rightarrow\mathbb{R}$ is given by%
\[
J_{\lambda}(u)=\frac{1}{2}\left\Vert u\right\Vert _{\lambda}^{2}-\frac{1}%
{4}\mathbb{D}(u),\text{ \ \ \ with \ }\left\Vert u\right\Vert _{\lambda}%
^{2}:=\int_{\mathbb{R}^{3}}\left(  \left\vert \nabla u\right\vert ^{2}+\lambda
u^{2}\right)  .
\]
Its Nehari manifold will be denoted by
\[
\mathcal{M}{_{\lambda}:}=\left\{  u\in H^{1}(\mathbb{R}^{3},\mathbb{R}%
):\,u\neq0,\quad\left\Vert u\right\Vert _{\lambda}^{2}=\mathbb{D}(u)\right\}
.
\]
We set%
\[
E_{\lambda}:=\inf_{u\in\mathcal{M}{_{\lambda}}}J_{\lambda}(u).
\]
The critical points of $J_{\lambda}$ on $\mathcal{M}{_{\lambda}}$ are the
nontrivial solutions to (\ref{limlambda}). Note that $u$ solves (\ref{limit})
if and only if $u_{\lambda}(x):=\lambda u(\sqrt{\lambda}x)$ solves
(\ref{limlambda}). Therefore,%
\[
E_{\lambda}=\lambda^{3/2}E_{1}.
\]
Minimizers of $J_{\lambda}$ on $\mathcal{M}{_{\lambda}}$ are called ground
states. Lieb established in \cite{lieb} the existence and uniqueness of ground
states up to sign and translations. Recently Ma and Zhao \cite{mz} showed that
every positive solution to problem (\ref{limlambda}) is radially symmetric,
and they concluded from this fact that the positive solution to this
problem\ is unique up to translations. We denote by $\omega_{\lambda}$ the
positive solution to problem (\ref{limlambda}) which is radially symmetric
with respect to the origin.

Fix a radial function $\varrho\in C^{\infty}(\mathbb{R}^{3},\mathbb{R})$ such
that $\varrho(x)=1$ if $\left\vert x\right\vert \leq\frac{1}{2}$ and
$\varrho(x)=0$ if $\left\vert x\right\vert \geq1.$ For $\varepsilon>0$ set
$\varrho_{\varepsilon}(x):=\varrho(\sqrt{\varepsilon}x)$, $\omega
_{\lambda,\varepsilon}:=\varrho_{\varepsilon}\omega_{\lambda}$ $\ $and
\begin{equation}
\upsilon_{\lambda,\varepsilon}=\frac{\left\Vert \omega_{\lambda,\varepsilon
}\right\Vert _{\lambda}}{\sqrt{\mathbb{D}(\omega_{\lambda,\varepsilon})}%
}\,\omega_{\lambda,\varepsilon}. \label{bumps}%
\end{equation}
Note that $\operatorname{supp}(\upsilon_{\lambda,\varepsilon})\subset
B(0,1/\sqrt{\varepsilon}):=\{x\in\mathbb{R}^{3}:\left\vert x\right\vert
\leq1/\sqrt{\varepsilon}\}$ and $\upsilon_{\lambda,\varepsilon}\in
\mathcal{M}{_{\lambda}.}$ An easy computation shows that%
\begin{equation}
\lim_{\varepsilon\rightarrow0}J_{\lambda}(\upsilon_{\lambda,\varepsilon
})=\lambda^{3/2}E_{1}. \label{enerbumps}%
\end{equation}

Observe that
\[
\ell_{G,V}:=\inf_{x\in\mathbb{R}^{3}}(\#Gx)V^{3/2}(x)<V^{3/2}(0)<\infty.
\]
We assume from now on that there exists $\alpha>0$ such that the set%
\[
\left\{  y\in\mathbb{R}^{3}:(\#Gy)V^{3/2}(y)\leq\ell_{G,V}+\alpha\right\}
\]
is compact. Then%
\[
M_{G,V}:=\left\{  y\in\mathbb{R}^{3}:(\#Gy)V^{3/2}(y)=\ell_{G,V}\right\}
\]
is a compact $G$-invariant set and all $G$-orbits in $M_{G,V}$ are finite. We
split $M_{G,V}$ according to the orbit type of its elements as follows: we
choose subgroups $G_{1},\ldots,G_{m}$ of $G$ such that the isotropy subgroup
$G_{x}$ of every point $x\in M_{G,V}$ is conjugate to precisely one of the
$G_{i}$'s, and we set%
\[
M_{i}:=\left\{  y\in M_{G,V}:G_{y}=gG_{i}g^{-1}\text{ for some }g\in
G\right\}  .
\]
Since isotropy subgroups satisfy $G_{gx}=gG_{x}g^{-1},$ the sets $M_{i}$ are
$G$-invariant and, since $V$ is continuous, they are closed and pairwise
disjoint, and
\[
M_{G,V}=M_{1}\cup\cdots\cup M_{m}.
\]
Moreover, since%
\[
\left\vert G/G_{i}\right\vert V^{3/2}(y)=(\#Gy)V^{3/2}(y)=\ell_{G,V}\text{
\ \ \ for all \ }y\in M_{i},
\]
the potential $V$ is constant on each $M_{i}.$ Here $\left\vert G/G_{i}%
\right\vert $ denotes the index of $G_{i}$ in $G.$ We denote by $V_{i}$ the
value of $V$ on $M_{i}.$

Let $\upsilon_{i,\varepsilon}:=\upsilon_{V_{i},\varepsilon}$ be defined as in
(\ref{bumps}) with $\lambda:=V_{i}.$ For $\xi\in M_{i}$ set%
\[
\phi_{\varepsilon,\xi}(x):=\upsilon_{i,\varepsilon}\left(  \frac{x-\xi
}{\varepsilon}\right)  \exp\left(  -\mathrm{i}A(\xi)\cdot\left(  \frac{x-\xi
}{\varepsilon}\right)  \right)  .
\]
The proofs of the following two lemmas are similar to those of Lemmas 1 and 2
in \cite{cc1}, so we shall omit them.

\begin{lemma}
Uniformly in $\xi\in M_{i},$ we have that%
\[
\lim_{\varepsilon\rightarrow0}\varepsilon^{-3}J_{\varepsilon,A,V}\left[
\pi_{\varepsilon,A,V}(\phi_{\varepsilon,\xi})\right]  =V_{i}^{3/2}E_{1},
\]
where $\pi_{\varepsilon,A,V}$\ is as in \emph{(\ref{radproj})}.
\end{lemma}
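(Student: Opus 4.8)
The plan is to compute the energy of the normalized test function $\pi_{\varepsilon,A,V}(\phi_{\varepsilon,\xi})$ by reducing, via a change of variables, to the corresponding quantity for the real limiting problem with $\lambda = V_i$, and then to invoke the asymptotics already recorded in \eqref{enerbumps}. Using \eqref{JpiA}, we have
\[
\varepsilon^{-3}J_{\varepsilon,A,V}\bigl[\pi_{\varepsilon,A,V}(\phi_{\varepsilon,\xi})\bigr]
=\varepsilon^{-3}\,\frac{\varepsilon^{2}\Vert\phi_{\varepsilon,\xi}\Vert_{\varepsilon,A,V}^{4}}{4\,\mathbb{D}(\phi_{\varepsilon,\xi})},
\]
so everything comes down to the two quantities $\Vert\phi_{\varepsilon,\xi}\Vert_{\varepsilon,A,V}^{2}$ and $\mathbb{D}(\phi_{\varepsilon,\xi})$. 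First I would substitute $x = \xi + \varepsilon y$ in both integrals. Because $\mathbb{D}$ depends only on $|\phi_{\varepsilon,\xi}| = |\upsilon_{i,\varepsilon}(\tfrac{x-\xi}{\varepsilon})|$, the phase factor drops out and one gets $\mathbb{D}(\phi_{\varepsilon,\xi}) = \varepsilon^{5}\,\mathbb{D}(\upsilon_{i,\varepsilon})$, with no error. For the norm, the point of including the factor $\exp\bigl(-\mathrm{i}A(\xi)\cdot\tfrac{x-\xi}{\varepsilon}\bigr)$ is precisely to cancel the leading part of the magnetic term: a direct computation gives
\[
\nabla_{\varepsilon,A}\phi_{\varepsilon,\xi}(x)
=\Bigl[\nabla\upsilon_{i,\varepsilon}\bigl(\tfrac{x-\xi}{\varepsilon}\bigr)+\mathrm{i}\bigl(A(x)-A(\xi)\bigr)\upsilon_{i,\varepsilon}\bigl(\tfrac{x-\xi}{\varepsilon}\bigr)\Bigr]\exp\bigl(-\mathrm{i}A(\xi)\cdot\tfrac{x-\xi}{\varepsilon}\bigr),
\]
so that $|\nabla_{\varepsilon,A}\phi_{\varepsilon,\xi}(x)|^{2}= |\nabla\upsilon_{i,\varepsilon}(\tfrac{x-\xi}{\varepsilon})|^{2}+|A(x)-A(\xi)|^{2}|\upsilon_{i,\varepsilon}(\tfrac{x-\xi}{\varepsilon})|^{2}$. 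After the substitution this becomes $\varepsilon^{3}$ times $\int(|\nabla\upsilon_{i,\varepsilon}|^{2}+|A(\xi+\varepsilon y)-A(\xi)|^{2}|\upsilon_{i,\varepsilon}|^{2}+V(\xi+\varepsilon y)|\upsilon_{i,\varepsilon}|^{2})\,dy$.

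Next I would control the two error terms relative to the clean limiting expression $\Vert\upsilon_{i,\varepsilon}\Vert_{V_i}^{2}=\int(|\nabla\upsilon_{i,\varepsilon}|^{2}+V_i|\upsilon_{i,\varepsilon}|^{2})$. Since $\upsilon_{i,\varepsilon}$ is supported in $B(0,1/\sqrt\varepsilon)$, on that ball $|A(\xi+\varepsilon y)-A(\xi)|\le C\varepsilon|y|\le C\sqrt\varepsilon$ by the $C^{1}$ bound on $A$ over the compact set $M_i$, so the magnetic error is $O(\varepsilon)\int|\upsilon_{i,\varepsilon}|^{2}\to 0$; similarly $|V(\xi+\varepsilon y)-V_i|\to 0$ uniformly on the support, giving a vanishing electric error. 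One also needs the uniform (in $\xi\in M_i$) convergence $\upsilon_{i,\varepsilon}\to\omega_{V_i}$ in $H^{1}$, which is immediate from the cutoff construction \eqref{bumps} and does not actually depend on $\xi$. Combining, $\varepsilon^{-2}\Vert\phi_{\varepsilon,\xi}\Vert_{\varepsilon,A,V}^{2}=\Vert\upsilon_{i,\varepsilon}\Vert_{V_i}^{2}+o(1)$ uniformly in $\xi\in M_i$, and since $\upsilon_{i,\varepsilon}\in\mathcal{M}_{V_i}$ we have $\Vert\upsilon_{i,\varepsilon}\Vert_{V_i}^{2}=\mathbb{D}(\upsilon_{i,\varepsilon})$, whence
\[
\varepsilon^{-3}J_{\varepsilon,A,V}\bigl[\pi_{\varepsilon,A,V}(\phi_{\varepsilon,\xi})\bigr]
=\frac{\Vert\upsilon_{i,\varepsilon}\Vert_{V_i}^{4}}{4\,\mathbb{D}(\upsilon_{i,\varepsilon})}+o(1)
=\frac{\mathbb{D}(\upsilon_{i,\varepsilon})}{4}+o(1)=J_{V_i}(\upsilon_{i,\varepsilon})+o(1),
\]
and \eqref{enerbumps} with $\lambda=V_i$ gives the limit $V_i^{3/2}E_1$.

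I expect the only genuinely delicate point to be making all estimates uniform in $\xi$ over the compact orbit-type stratum $M_i$ — concretely, the uniform modulus of continuity of $V$ and the uniform Lipschitz bound for $A$ on a neighbourhood of $M_i$ — together with checking that the errors really do go to zero after being multiplied by $\varepsilon^{-3}$ (they do, because each error carries an extra power of $\varepsilon$ or a uniformly vanishing factor while $\mathbb{D}(\upsilon_{i,\varepsilon})$ stays bounded away from $0$ and $\infty$). Everything else is the bookkeeping of the two changes of variables and the cancellation built into the phase of $\phi_{\varepsilon,\xi}$; since the paper states that this mirrors Lemmas 1 and 2 of \cite{cc1}, I would at this point simply cite that reference rather than reproduce the routine computation.
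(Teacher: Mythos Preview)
Your proposal is correct and follows exactly the standard computation the paper defers to \cite{cc1}: rescale, use the phase cancellation to reduce the magnetic norm to the real $\Vert\cdot\Vert_{V_i}$-norm plus errors controlled by the $C^{1}$ regularity of $A$ and the continuity of $V$ on a compact neighbourhood of $M_i$, and then invoke \eqref{enerbumps}. The only slip is the exponent in the line ``$\varepsilon^{-2}\Vert\phi_{\varepsilon,\xi}\Vert_{\varepsilon,A,V}^{2}=\Vert\upsilon_{i,\varepsilon}\Vert_{V_i}^{2}+o(1)$'': it should read $\varepsilon^{-3}$, consistent with your own earlier $\varepsilon^{3}$ factor from the change of variables and with your (correct) final display.
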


It is well known that the map $G/G_{\xi}\rightarrow G\xi$ given by $gG_{\xi
}\mapsto g\xi$ is a homeomorphism, see e.g. \cite{tD}. So, if $G_{i}%
\subset\ker\tau$ and $\xi\in M_{i},$ then the map
\[
G\xi\rightarrow\mathbb{S}^{1},\ \ \ \ g\xi\mapsto\tau(g),
\]
is well defined and continuous. Set
\begin{equation}
\psi_{\varepsilon,\xi}(x):=\sum_{g\xi\in G\xi}\tau(g)\upsilon_{i,\varepsilon
}\left(  \frac{x-g\xi}{\varepsilon}\right)  e^{-\mathrm{i}A(g\xi)\cdot\left(
\frac{x-g\xi}{\varepsilon}\right)  }. \label{psi}%
\end{equation}

\begin{lemma}
\label{lemin}Assume that $G_{i}\subset\ker\tau$. Then, the following
hold:\newline(a) For every $\xi\in M_{i}$ and $\varepsilon>0,$ one has that%
\[
\psi_{\varepsilon,\xi}(gx)=\tau(g)\psi_{\varepsilon,\xi}(x)\text{
\ \ \ \ }\forall g\in G,\text{ }x\in\mathbb{R}^{3}.
\]
(b) For every $\xi\in M_{i}$ and $\varepsilon>0,$ one has that%
\[
\tau(g)\psi_{\varepsilon,g\xi}(x)=\psi_{\varepsilon,\xi}(x)\text{
\ \ \ \ }\forall g\in G,\text{ }x\in\mathbb{R}^{3}.
\]
(c) One has that%
\[
\lim_{\varepsilon\rightarrow0}{\varepsilon}^{-3}J_{\varepsilon,A,V}\left[
\pi_{\varepsilon,A,V}(\psi_{\varepsilon,\xi})\right]  =\ell_{G,V}E_{1}.
\]
uniformly in $\xi\in M_{i}.$
\end{lemma}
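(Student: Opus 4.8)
\textbf{Proof plan for Lemma \ref{lemin}.}

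Parts (a) and (b) are purely algebraic and follow from the definition \eqref{psi} together with the fact that $\tau$ is a homomorphism and that the summation runs over the orbit $G\xi$. For (a), given $g\in G$ I would substitute $x\mapsto gx$ in \eqref{psi}, use $A(gh\xi)=gA(h\xi)$ from assumption \eqref{G} to rewrite the phase factor $A(gh\xi)\cdot\bigl(\frac{gx-gh\xi}{\varepsilon}\bigr) = A(h\xi)\cdot\bigl(\frac{x-h\xi}{\varepsilon}\bigr)$ (since $g$ is a linear isometry and preserves the inner product), reindex the sum by replacing $h$ with $g^{-1}h$, and factor out $\tau(g)$ using $\tau(gh)=\tau(g)\tau(h)$ and the well-definedness of $h\xi\mapsto\tau(h)$ on $G\xi$ (valid because $G_i\subset\ker\tau$). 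Part (b) is the same computation applied to the single change of base point $\xi\mapsto g\xi$: the sum over $G(g\xi)=G\xi$ is unchanged as a set and one only has to track how the coefficient $\tau(h)$ transforms, yielding $\psi_{\varepsilon,g\xi}=\tau(g)^{-1}\psi_{\varepsilon,\xi}=\overline{\tau(g)}\,\psi_{\varepsilon,\xi}$, i.e.\ $\tau(g)\psi_{\varepsilon,g\xi}=\psi_{\varepsilon,\xi}$.

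For part (c), the strategy is to compute the energy $\varepsilon^{-3}J_{\varepsilon,A,V}[\pi_{\varepsilon,A,V}(\psi_{\varepsilon,\xi})]$ via the formula \eqref{JpiA}, which reduces everything to understanding the asymptotics of $\|\psi_{\varepsilon,\xi}\|_{\varepsilon,A,V}^2$ and $\mathbb{D}(\psi_{\varepsilon,\xi})$ as $\varepsilon\to0$. The bumps $\upsilon_{i,\varepsilon}(\frac{\cdot-g\xi}{\varepsilon})e^{-\mathrm{i}A(g\xi)\cdot(\cdot-g\xi)/\varepsilon}$ centered at the $\#G\xi$ distinct points $g\xi$ of the orbit have supports contained in balls of radius $\varepsilon/\sqrt{\varepsilon}=\sqrt{\varepsilon}$ around these points; since the points of $G\xi$ are separated by a fixed positive distance (uniformly in $\xi\in M_i$ by compactness of $M_i$ and finiteness of the orbits) these supports are pairwise disjoint for $\varepsilon$ small. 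Hence the cross terms in $\|\psi_{\varepsilon,\xi}\|_{\varepsilon,A,V}^2$ vanish, and each diagonal term is the energy contribution of a single bump; the computation is exactly that of the preceding Lemma (the one before Lemma \ref{lemin}), using $V(g\xi)=V_i$, giving $\varepsilon^{-3}\|\psi_{\varepsilon,\xi}\|_{\varepsilon,A,V}^2 = (\#G\xi)\,\varepsilon^{-3}\|\phi_{\varepsilon,\xi}\|_{\varepsilon,A,V}^2 + o(1)$. For $\mathbb{D}$, however, the nonlocal term does \emph{not} split cleanly: $\mathbb{D}(\sum_k f_k)=\sum_{k,l}\int\int \frac{|f_k(x)|^2|f_l(y)|^2}{|x-y|}$, and the off-diagonal terms with $k\neq l$ are \emph{not} supported on overlapping sets but are nevertheless nonzero because the Riesz kernel $|x-y|^{-1}$ is nonlocal. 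The key point is that these off-diagonal terms are $O(\varepsilon^4)\cdot O(1) = o(\varepsilon^3)$: each $\int\int \frac{|f_k(x)|^2|f_l(y)|^2}{|x-y|}dx\,dy$ with $k\neq l$ is bounded by $\frac{1}{d_0}\|f_k\|_{L^2}^2\|f_l\|_{L^2}^2$ where $d_0>0$ is the minimal orbit-point separation, and $\|f_k\|_{L^2}^2 = O(\varepsilon^3)$ after the change of variables, so each cross term is $O(\varepsilon^6)=o(\varepsilon^3)$. Thus $\varepsilon^{-3}\mathbb{D}(\psi_{\varepsilon,\xi}) = (\#G\xi)\varepsilon^{-3}\mathbb{D}(\phi_{\varepsilon,\xi}) + o(1)$ as well.

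Putting these together in \eqref{JpiA}, and using that by construction $\upsilon_{i,\varepsilon}\in\mathcal{M}_{V_i}$ so that $\|\phi_{\varepsilon,\xi}\|_{\varepsilon,A,V}^2$ and $\varepsilon^{-2}\mathbb{D}(\phi_{\varepsilon,\xi})$ are asymptotically equal (both being $\varepsilon^3 V_i^{3/2}\cdot(\text{normalizing constant})$ as in the previous Lemma), I obtain
\[
\varepsilon^{-3}J_{\varepsilon,A,V}\bigl[\pi_{\varepsilon,A,V}(\psi_{\varepsilon,\xi})\bigr] = (\#G\xi)\,\varepsilon^{-3}J_{\varepsilon,A,V}\bigl[\pi_{\varepsilon,A,V}(\phi_{\varepsilon,\xi})\bigr] + o(1) \longrightarrow (\#G\xi)\,V_i^{3/2}E_1 = \ell_{G,V}E_1,
\]
the last equality because $\#G\xi = |G/G_i|$ and $|G/G_i|V_i^{3/2}=\ell_{G,V}$ for $\xi\in M_i$. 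The uniformity in $\xi\in M_i$ follows from the uniformity in the preceding Lemma together with the fact that the orbit-separation constant $d_0$ and the orbit cardinality $\#G\xi$ are the same for all $\xi$ of the same orbit type (namely $G_\xi$ conjugate to $G_i$). The main obstacle is the bookkeeping for the nonlocal cross terms in $\mathbb{D}$ and verifying that the exponential phase factors $e^{-\mathrm{i}A(g\xi)\cdot(x-g\xi)/\varepsilon}$ — which complicate the quadratic form $\|\cdot\|_{\varepsilon,A,V}$ through the magnetic gradient $\nabla_{\varepsilon,A}$ — contribute, on each disjoint bump support, exactly the same leading-order energy as in the single-bump computation of the previous Lemma; this is where one invokes that $\nabla_{\varepsilon,A}$ applied to $\upsilon_{i,\varepsilon}(\frac{x-g\xi}{\varepsilon})e^{-\mathrm{i}A(g\xi)\cdot(x-g\xi)/\varepsilon}$ produces a factor that, after rescaling $y=(x-g\xi)/\varepsilon$, differs from $\nabla\upsilon_{i,\varepsilon}(y)$ by a term of the form $\mathrm{i}(A(g\xi+\varepsilon y)-A(g\xi))\upsilon_{i,\varepsilon}(y)$ which is $o(1)$ uniformly since $A\in C^1$ and $|y|\leq 1/\sqrt{\varepsilon}$ forces $\varepsilon|y|\leq\sqrt{\varepsilon}\to0$.
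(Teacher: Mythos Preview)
Your proposal is correct and follows the natural line of argument; the paper itself omits the proof, referring to Lemmas~1 and~2 of \cite{cc1}, where the same strategy---disjointly supported bumps for the quadratic part, control of the nonlocal cross terms in $\mathbb{D}$, and reduction to the single-bump estimate of the preceding lemma---is carried out. One small bookkeeping remark: since $\mathbb{D}(\phi_{\varepsilon,\xi})\sim\varepsilon^{5}$ and it is the ratio $\|\psi\|^{4}/\mathbb{D}(\psi)$ that enters \eqref{JpiA}, what you actually need is that the off-diagonal terms in $\mathbb{D}(\psi_{\varepsilon,\xi})$ are $o(\varepsilon^{5})$ rather than merely $o(\varepsilon^{3})$, and your estimate $O(\varepsilon^{6})$ for each cross term delivers precisely this.
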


Let
\[
M_{\tau}:=\left\{  y\in M_{G,V}:G_{y}\subset\ker\tau\right\}  =\bigcup
_{G_{i}\subset\ker\tau}M_{i}.
\]

\begin{proposition}
\label{ingoing}The map $\widehat{\iota}_{\varepsilon}:M_{\tau}\rightarrow
\mathcal{N}_{\varepsilon,A,V}^{\tau}$ given by%
\[
\widehat{\iota}_{\varepsilon}(\xi):=\pi_{\varepsilon,A,V}(\psi_{\varepsilon
,\xi})
\]
is well defined and continuous, and satisfies%
\[
\tau(g)\widehat{\iota}_{\varepsilon}(g\xi)=\widehat{\iota}_{\varepsilon}%
(\xi)\text{ \ \ \ \ }\forall\xi\in M_{\tau},\text{ }g\in G.
\]
Moreover, given $d>\ell_{G}E_{1},$ there exists $\varepsilon_{d}>0$ such that%
\[
\varepsilon^{-3}\ J_{\varepsilon,A,V}(\widehat{\iota}_{\varepsilon}(\xi))\leq
d\text{ \ \ \ \ }\forall\xi\in M_{\tau},\text{ }\varepsilon\in(0,\varepsilon
_{d}).
\]

\end{proposition}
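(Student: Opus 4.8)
The plan is to verify the three claimed properties of $\widehat{\iota}_\varepsilon$ in turn, the first two being almost immediate from Lemma \ref{lemin} and the construction of the radial projection, and the third requiring a uniformity-in-$\xi$ argument.

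First I would check that $\widehat{\iota}_\varepsilon$ is well defined. For $\xi \in M_\tau$ we have $\xi \in M_i$ for some $i$ with $G_i \subset \ker\tau$, so Lemma \ref{lemin}(a) applies and gives $\psi_{\varepsilon,\xi} \in H^1_{\varepsilon,A}(\mathbb{R}^3,\mathbb{C})^\tau$. Since $\psi_{\varepsilon,\xi} \ne 0$ (the bumps $\upsilon_{i,\varepsilon}\left(\frac{\cdot - g\xi}{\varepsilon}\right)$ have disjoint supports once $\varepsilon$ is small, as their supports lie in balls of radius $\sqrt{\varepsilon}$ around the distinct points $g\xi$, so there is no cancellation), the radial projection $\pi_{\varepsilon,A,V}$ from $(\ref{radproj})$ is defined at $\psi_{\varepsilon,\xi}$ and lands on $\mathcal{N}_{\varepsilon,A,V}^\tau$. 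Continuity in $\xi$ follows because $\xi \mapsto \psi_{\varepsilon,\xi}$ is continuous into $H^1_{\varepsilon,A}(\mathbb{R}^3,\mathbb{C})$ — the summands depend continuously on $g\xi$ through translation and through the smooth factors $A(g\xi)$, and the finite $G$-orbit $G\xi$ varies continuously with $\xi$ — composed with the continuous map $\pi_{\varepsilon,A,V}$. One subtlety here is that the number of points in $G\xi$ and the labelling of the orbit may jump as $\xi$ moves within $M_i$; but since all points of $M_i$ have isotropy conjugate to $G_i$, the orbit $G\xi$ has constant cardinality $|G/G_i|$ on $M_i$, and Lemma \ref{lemin}(b) shows the expression $\psi_{\varepsilon,\xi}$ is independent of the chosen coset representatives, so the map is genuinely well defined and continuous on each closed piece $M_i$, hence on $M_\tau$.

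Next, the equivariance relation $\tau(g)\widehat{\iota}_\varepsilon(g\xi) = \widehat{\iota}_\varepsilon(\xi)$ is a direct consequence of Lemma \ref{lemin}(b) together with the fact that $\pi_{\varepsilon,A,V}(\lambda u) = \lambda\,\pi_{\varepsilon,A,V}(u)$ for $\lambda \in \mathbb{S}^1$ (visible from $(\ref{radproj})$, since both $\|\cdot\|_{\varepsilon,A,V}$ and $\mathbb{D}$ are invariant under multiplication by unit complex numbers): indeed
\[
\tau(g)\,\widehat{\iota}_\varepsilon(g\xi) = \tau(g)\,\pi_{\varepsilon,A,V}(\psi_{\varepsilon,g\xi}) = \pi_{\varepsilon,A,V}\big(\tau(g)\psi_{\varepsilon,g\xi}\big) = \pi_{\varepsilon,A,V}(\psi_{\varepsilon,\xi}) = \widehat{\iota}_\varepsilon(\xi).
\]

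The main point — and the place where I expect the real work to be — is the energy estimate. By $(\ref{JpiA})$, $\varepsilon^{-3}J_{\varepsilon,A,V}(\widehat{\iota}_\varepsilon(\xi))$ equals $\varepsilon^{-3}$ times the value $\frac{\varepsilon^2 \|\psi_{\varepsilon,\xi}\|_{\varepsilon,A,V}^4}{4\,\mathbb{D}(\psi_{\varepsilon,\xi})}$, which is exactly the quantity whose limit is computed in Lemma \ref{lemin}(c) to be $\ell_{G,V}E_1$, uniformly in $\xi \in M_i$. Since $M_\tau$ is the union of the finitely many closed pieces $M_i$ with $G_i \subset \ker\tau$, uniform convergence on each piece gives uniform convergence on all of $M_\tau$. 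Hence, given $d > \ell_{G,V}E_1$, choose for each such $i$ a threshold $\varepsilon_{d,i} > 0$ beyond which $\varepsilon^{-3}J_{\varepsilon,A,V}(\widehat{\iota}_\varepsilon(\xi)) \le d$ for all $\xi \in M_i$, and set $\varepsilon_d := \min_i \varepsilon_{d,i}$; then the desired inequality holds for all $\xi \in M_\tau$ and all $\varepsilon \in (0,\varepsilon_d)$. The only care needed is that Lemma \ref{lemin}(c) is stated with uniformity in $\xi$ already built in, so no new estimate is required here — one just assembles the finitely many pieces. I would also remark that writing $\ell_{G}E_1$ in the statement is shorthand for $\ell_{G,V}E_1$, consistent with Lemma \ref{lemin}(c).
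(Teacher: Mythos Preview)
Your proposal is correct and follows the same approach as the paper, which simply states that the proposition follows immediately from Lemma \ref{lemin}. You have merely unpacked the details---well-definedness from part (a), equivariance from part (b), and the uniform energy estimate from part (c)---that the paper leaves implicit.
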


\begin{proof}
This follows immediately from Lemma \ref{lemin}.
\end{proof}

\section{A local baryorbit map}

\label{baryorbit}Let $W:\mathbb{R}^{3}\rightarrow\mathbb{R}$ be a bounded,
uniformly continuous function with $\inf_{\mathbb{R}^{3}}W>0$ and such that
$W(gx)=W(x)$ for all $g\in G$, $x\in\mathbb{R}^{3}$$.$ We assume that the set%
\begin{equation}
\left\{  y\in\mathbb{R}^{3}:(\#Gy)W^{3/2}(y)\leq\ell_{G,W}+\alpha\right\}
\label{compW}%
\end{equation}
is compact, where $\ell_{G,W}:=\inf_{x\in\mathbb{R}^{3}}(\#Gx)W^{3/2}(x),$ and
consider the real-valued problem%
\begin{equation}
\left\{
\begin{array}
[c]{l}%
-\varepsilon^{2}\Delta v+W(x)v=\frac{1}{\varepsilon^{2}}\left(  \frac
{1}{\left\vert x\right\vert }\ast u^{2}\right)  u,\\
v\in H^{1}(\mathbb{R}^{3},\mathbb{R}),\\
v(gx)=v(x)\text{ \ }\forall x\in\mathbb{R}^{3},\text{ }g\in G.
\end{array}
\right.  \label{real}%
\end{equation}
We write%
\[
\left\langle v,w\right\rangle _{\varepsilon,W}:=\int_{\mathbb{R}^{3}}\left(
\varepsilon^{2}\nabla v\cdot\nabla w+W(x)vw\right)  ,\quad\left\Vert
v\right\Vert _{\varepsilon,W}^{2}:=\int_{\mathbb{R}^{3}}\left(  \left\vert
\varepsilon\nabla v\right\vert ^{2}+W(x)v^{2}\right)  ,
\]
and set%
\[
H^{1}(\mathbb{R}^{3},\mathbb{R})^{G}:=\{v\in H^{1}(\mathbb{R}^{3}%
,\mathbb{R}):v(gx)=v(x)\text{ }\forall x\in\mathbb{R}^{3},\text{ }g\in G\}.
\]
The nontrivial solutions of (\ref{real}) are the critical points of the energy
functional
\[
J_{\varepsilon,W}(v)=\frac{1}{2}\left\Vert v\right\Vert _{\varepsilon,W}%
^{2}-\frac{1}{4\varepsilon^{2}}\mathbb{D}(v)
\]
on the Nehari manifold%
\[
\mathcal{M}_{\varepsilon,W}^{G}:=\{v\in H^{1}(\mathbb{R}^{3},\mathbb{R}%
)^{G}:v\neq0,\text{ }\left\Vert v\right\Vert _{\varepsilon,W}^{2}%
={\varepsilon^{-2}}\mathbb{D}(v)\}.
\]
Set%
\begin{equation}
c_{\varepsilon,W}^{G}:=\inf_{\mathcal{M}_{\varepsilon,W}^{G}}J_{\varepsilon
,W}=\inf_{\substack{v\in H^{1}(\mathbb{R}^{3},\mathbb{R})^{G}\\v\neq0}%
}\frac{\varepsilon^{2}\left\Vert v\right\Vert _{\varepsilon,W}^{4}%
}{4\mathbb{D}(v)}. \label{infepsV}%
\end{equation}

We wish to study the behavior of "minimizing sequences" for the family of
problems (\ref{real}), parametrized by $\varepsilon,$ as $\varepsilon
\rightarrow0.$ This is described in Proposition \ref{epsilonPS} below. We
start with some lemmas.

\begin{lemma}
\label{cotas}$0<(\inf_{\mathbb{R}^{3}}W)^{3/2}E_{1}\leq\varepsilon
^{-3}c_{\varepsilon,W}^{G}$ \ for every $\varepsilon>0,$ and%
\[
\limsup_{\varepsilon\rightarrow0}\varepsilon^{-3}c_{\varepsilon,W}^{G}\leq
\ell_{G,W}E_{1},
\]

\end{lemma}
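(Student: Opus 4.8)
The plan is to establish the two inequalities separately. For the lower bound, I would use the diamagnetic-type estimate together with the rescaled functional. Given any $v \in H^1(\mathbb{R}^3,\mathbb{R})^G$ with $v \neq 0$, perform the change of variable $v_\varepsilon(x) := v(\varepsilon x)$, which sends $H^1(\mathbb{R}^3,\mathbb{R})^G$ into itself and yields $\varepsilon^{-3}J_{\varepsilon,W}(v) = J_{1,W_\varepsilon}(v_\varepsilon)$ with $W_\varepsilon(x) := W(\varepsilon x)$, exactly as in the proof of Proposition \ref{palaissmale}. Since $W_\varepsilon(x) \geq \inf_{\mathbb{R}^3} W =: W_0$ pointwise, one has the comparison $\left\Vert v_\varepsilon \right\Vert_{1,W_\varepsilon}^2 \geq \left\Vert v_\varepsilon \right\Vert_{W_0}^2$, and hence, using the formula $\varepsilon^{-3}c_{\varepsilon,W}^G = \inf_{v\neq 0} \frac{\left\Vert v_\varepsilon\right\Vert_{1,W_\varepsilon}^4}{4\mathbb{D}(v_\varepsilon)}$ from (\ref{infepsV}), the infimum is bounded below by $\inf_{w\neq 0}\frac{\left\Vert w\right\Vert_{W_0}^4}{4\mathbb{D}(w)}$ over all of $H^1(\mathbb{R}^3,\mathbb{R})$, which is exactly $E_{W_0} = W_0^{3/2}E_1$ by the scaling identity $E_\lambda = \lambda^{3/2}E_1$ recorded in section \ref{embedding}. (Positivity of $W_0$ and of $E_1$ gives strict positivity.)

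For the upper bound, I would use the test functions $\upsilon_{\lambda,\varepsilon}$ from (\ref{bumps}) localized near $M_{G,W}$. Pick $\xi \in M_{G,W}$ with isotropy $G_\xi = G_i$ (so $W(\xi) = W_i$ and $\left\vert G/G_i\right\vert W_i^{3/2} = \ell_{G,W}$), and form the $G$-invariant bump $\psi_{\varepsilon,\xi}$ as in (\ref{psi}) but with $\tau$ trivial, i.e. a sum of $\#G\xi = \left\vert G/G_i\right\vert$ translates of $\upsilon_{W_i,\varepsilon}((\,\cdot - g\xi)/\varepsilon)$ with no phase factor (real-valued, $G$-invariant, and since the translates have disjoint supports for $\varepsilon$ small the norms and $\mathbb{D}$ add up — with the cross terms in $\mathbb{D}$ going to zero). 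Evaluating along the radial projection onto $\mathcal{M}_{\varepsilon,W}^G$ and using the uniform limit $\lim_{\varepsilon\to 0} J_{W_i}(\upsilon_{W_i,\varepsilon}) = W_i^{3/2}E_1$ from (\ref{enerbumps}), together with $W(x) = W_i + o(1)$ on the support of each translate (which shrinks to $g\xi$), one gets $\limsup_{\varepsilon\to 0}\varepsilon^{-3}J_{\varepsilon,W}[\pi(\psi_{\varepsilon,\xi})] = \left\vert G/G_i\right\vert W_i^{3/2}E_1 = \ell_{G,W}E_1$. Since $c_{\varepsilon,W}^G \leq J_{\varepsilon,W}[\pi(\psi_{\varepsilon,\xi})]$, taking the limsup finishes this half. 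This is precisely the content underlying Lemma \ref{lemin}(c) with $\tau$ trivial and $A = 0$, so the computation can be quoted almost verbatim.

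The main obstacle is the asymptotics of the nonlocal term $\mathbb{D}$ on the multi-bump test function: one must check that the interaction (cross) terms $\int\int \frac{\upsilon_{i,\varepsilon}((x-g\xi)/\varepsilon)^2\,\upsilon_{i,\varepsilon}((y-g'\xi)/\varepsilon)^2}{\left\vert x-y\right\vert}\,dx\,dy$ for $g\xi \neq g'\xi$ are of lower order after rescaling by $\varepsilon^{-3}$. Since distinct orbit points $g\xi$ are separated by a fixed distance $\geq \operatorname{dist}(g\xi, g'\xi) > 0$ while each bump is concentrated on a ball of radius $\sim\varepsilon/\sqrt{\varepsilon} = \sqrt{\varepsilon}$ around its center, the kernel $1/\left\vert x-y\right\vert$ is uniformly bounded on the relevant region and the cross term is $O(\varepsilon^6)$, hence negligible compared to the diagonal terms which are $O(\varepsilon^3)$; the exponential decay of $\omega_{W_i}$ makes this rigorous. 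Everything else is a routine combination of the scaling identities and the compactness hypothesis (\ref{compW}), which guarantees $M_{G,W}$ is nonempty and compact so that a minimizing point $\xi$ exists.
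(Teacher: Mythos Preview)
Your proposal is correct, and for the lower bound your rescaling argument is exactly the paper's. For the upper bound you take a slightly different route than the paper, and it is worth recording the difference.

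The paper does \emph{not} use the bump parameter $W_i=W(\xi)$. Instead it fixes an auxiliary radius $\rho>0$, sets $W_\rho:=\sup_{B(\xi,\rho)}W$, and builds the multi-bump $w_{\rho,\varepsilon}$ out of translates of $\upsilon_{W_\rho,\varepsilon}$. The point of this trick is twofold. First, since $W\leq W_\rho$ on the (compactly contained) support of $w_{\rho,\varepsilon}$, one has $\Vert w_{\rho,\varepsilon}\Vert_{\varepsilon,W}\leq\Vert w_{\rho,\varepsilon}\Vert_{\varepsilon,W_\rho}$ with no continuity estimate needed. Second, the cross terms in $\mathbb{D}$ are nonnegative, so they only \emph{increase} $\mathbb{D}(w_{\rho,\varepsilon})$ relative to the sum of the diagonal pieces; in the quotient $\varepsilon^{2}\Vert w\Vert^{4}/4\mathbb{D}(w)$ this goes the right way and no cross-term asymptotics are required. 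One then lets $\varepsilon\to0$ and afterward $\rho\to0$ to recover $W_\rho\to W(\xi)$. Your approach with $\lambda=W_i$ and a single limit $\varepsilon\to0$ is equally valid but requires the two estimates you flag: the $o(1)$ control of $W(x)-W_i$ on the shrinking support, and the explicit decay of the cross terms. Both are routine.

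One small correction: the diagonal contributions to $\mathbb{D}$ from a single rescaled bump are of order $\varepsilon^{5}$, not $\varepsilon^{3}$ (the change of variable $x\mapsto\varepsilon x+\xi$, $y\mapsto\varepsilon y+\xi$ brings out $\varepsilon^{6}$ from the Jacobians and $\varepsilon^{-1}$ from the kernel). Your cross-term bound $O(\varepsilon^{6})$ is correct, and since $\varepsilon^{6}\ll\varepsilon^{5}$ your conclusion that the cross terms are negligible stands; only the stated exponent for the diagonal is off.
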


\begin{proof}
Set $W_{0}:=\inf_{\mathbb{R}^{3}}W$ and write $v_{\varepsilon}%
(x):=v(\varepsilon x).$ Then $\left\Vert v_{\varepsilon}\right\Vert _{W_{0}%
}^{2}=\varepsilon^{-3}\left\Vert v\right\Vert _{\varepsilon,W_{0}}^{2}$ and
$\mathbb{D}(v_{\varepsilon})=\varepsilon^{-5}\mathbb{D}(v)$. If follows
immediately from (\ref{infepsV}) that
\[
W_{0}^{3/2}E_{1}\leq c_{1,W_{0}}^{G}=\varepsilon^{-3}c_{\varepsilon,W_{0}}%
^{G}\leq\varepsilon^{-3}c_{\varepsilon,W}^{G}.
\]
To prove the second inequality, take $\xi\in\mathbb{R}^{3}$ such that
$(\#G\xi)W^{3/2}(\xi)=\ell_{G,W}E_{1}$. Write $G\xi:=\{\xi_{1},...,\xi_{m}\}.$
Fix $0<\rho<\frac{1}{2}\min_{i\neq j}\left\vert \xi_{i}-\xi_{j}\right\vert ,$
and let $W_{\rho}:=\sup_{B(\xi_{1},\rho)}W$. Let $\upsilon_{\rho,\varepsilon
}:=\upsilon_{W_{\rho},\varepsilon}$ be defined as in (\ref{bumps}) with
$\lambda:=W_{\rho}.$ Set
\[
w_{\rho,\varepsilon}(x):=%
{\textstyle\sum\limits_{i=1}^{m}}
\upsilon_{\rho,\varepsilon}\left(  \frac{x-\xi_{i}}{\varepsilon}\right)  .
\]
If $\sqrt{\varepsilon}\leq\rho,$ then\ supp$(w_{\rho,\varepsilon})\subset
\cup_{i=1}^{m}B(\xi_{i},\rho)$. Therefore $w_{\rho,\varepsilon}\in
\mathcal{M}_{\varepsilon,W_{\rho}}^{G}$ and%
\[
\varepsilon^{-3}c_{\varepsilon,W}^{G}\leq\varepsilon^{-3}J_{\varepsilon
,W}(w_{\rho,\varepsilon})\leq\varepsilon^{-3}J_{\varepsilon,W\rho}%
(w_{\rho,\varepsilon})=mJ_{W_{\rho}}(\upsilon_{\rho,\varepsilon}).
\]
It follows from (\ref{enerbumps}) that%
\[
\limsup_{\varepsilon\rightarrow0}\varepsilon^{-3}c_{\varepsilon,W}^{G}\leq
mW_{\rho}^{3/2}E_{1}.
\]
Letting $\rho\rightarrow0,$ we conclude that%
\[
\limsup_{\varepsilon\rightarrow0}\varepsilon^{-3}c_{\varepsilon,W}^{G}%
\leq(\#G\xi)W^{3/2}(\xi)E_{1}=\ell_{G,W}E_{1},
\]
as claimed.
\end{proof}

\begin{lemma}
\label{wcont}Let $\varepsilon_{n}>0$ and $\xi_{n}\in\mathbb{R}^{3}$ such that
$\varepsilon_{n}\rightarrow0$ and $(W(\xi_{n}))$ converges. Set $\widehat
{W}_{n}(x):=W(\varepsilon_{n}x+\xi_{n})$ and $\widehat{W}:=\lim_{n\rightarrow
\infty}W(\xi_{n}).$ Then, for every sequence $(u_{n})$ in $H^{1}%
(\mathbb{R}^{3},\mathbb{R})$ such that $u_{n}\rightharpoonup u$ weakly in
$H^{1}(\mathbb{R}^{3},\mathbb{R})$ and every $w\in H^{1}(\mathbb{R}%
^{3},\mathbb{R}),$ the following hold:%
\[
\lim_{n\rightarrow\infty}\left(  \left\langle u_{n},w\right\rangle
_{1,\widehat{W}_{n}}-\left\langle u_{n}-u,w\right\rangle _{1,\widehat{W}_{n}%
}\right)  =\left\langle u,w\right\rangle _{1,\widehat{W}}%
\]
and%
\[
\lim_{n\rightarrow\infty}\left(  \left\Vert u_{n}\right\Vert _{1,\widehat
{W}_{n}}^{2}-\left\Vert u_{n}-u\right\Vert _{1,\widehat{W}_{n}}^{2}\right)
=\left\Vert u\right\Vert _{1,\widehat{W}}^{2}.
\]

\end{lemma}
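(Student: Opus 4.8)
The plan is to reduce both limits to elementary bilinear identities and then invoke weak $H^{1}$-convergence for the gradient terms and a locally uniform convergence $\widehat{W}_{n}\to\widehat{W}$ for the potential terms.

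First I would record the key property of the weights. Since $W$ is uniformly continuous and $\varepsilon_{n}\to0$, for every $R>0$ we have $\sup_{|x|\leq R}|W(\varepsilon_{n}x+\xi_{n})-W(\xi_{n})|\to0$; combined with $W(\xi_{n})\to\widehat{W}$ this gives $\widehat{W}_{n}\to\widehat{W}$ uniformly on every ball, while $0<\inf_{\mathbb{R}^{3}}W\leq\widehat{W}_{n}(x)\leq\sup_{\mathbb{R}^{3}}W$ for all $n,x$. From this I claim that $\int_{\mathbb{R}^{3}}\widehat{W}_{n}\varphi\psi\to\widehat{W}\int_{\mathbb{R}^{3}}\varphi\psi$ for all $\varphi,\psi\in L^{2}(\mathbb{R}^{3},\mathbb{R})$: split the integral over $\{|x|\leq R\}$ and $\{|x|>R\}$, bound the first piece by $\sup_{|x|\leq R}|\widehat{W}_{n}-\widehat{W}|\,\|\varphi\|_{L^{2}}\|\psi\|_{L^{2}}$ and the second by $(\sup_{\mathbb{R}^{3}}W+\widehat{W})\,\|\varphi\|_{L^{2}(\{|x|>R\})}\|\psi\|_{L^{2}}$, and let $n\to\infty$, then $R\to\infty$. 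A minor variant, taking $\varphi=u_{n}$ and $\psi=u$, gives $\int_{\mathbb{R}^{3}}\widehat{W}_{n}u_{n}u\to\widehat{W}\int_{\mathbb{R}^{3}}u^{2}$: indeed $\int\widehat{W}u_{n}u\to\widehat{W}\int u^{2}$ by weak $L^{2}$-convergence, and the remainder $\int(\widehat{W}_{n}-\widehat{W})u_{n}u$ is handled exactly as above, using that $\|u_{n}\|_{L^{2}}$ is bounded (as $(u_{n})$ converges weakly in $H^{1}$).

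For the first displayed limit I would observe that, by bilinearity of $\langle\cdot,\cdot\rangle_{1,\widehat{W}_{n}}$,
\[
\langle u_{n},w\rangle_{1,\widehat{W}_{n}}-\langle u_{n}-u,w\rangle_{1,\widehat{W}_{n}}=\langle u,w\rangle_{1,\widehat{W}_{n}}=\int_{\mathbb{R}^{3}}\nabla u\cdot\nabla w+\int_{\mathbb{R}^{3}}\widehat{W}_{n}\,uw,
\]
so the gradient term is constant in $n$ and the potential term tends to $\widehat{W}\int uw$ by the convergence just established; the sum is $\langle u,w\rangle_{1,\widehat{W}}$. For the second displayed limit I would use the algebraic identity $\|u_{n}\|_{1,\widehat{W}_{n}}^{2}-\|u_{n}-u\|_{1,\widehat{W}_{n}}^{2}=2\langle u_{n},u\rangle_{1,\widehat{W}_{n}}-\|u\|_{1,\widehat{W}_{n}}^{2}$. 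Here $\|u\|_{1,\widehat{W}_{n}}^{2}\to\|u\|_{1,\widehat{W}}^{2}$ again by the potential convergence, while $\langle u_{n},u\rangle_{1,\widehat{W}_{n}}=\int\nabla u_{n}\cdot\nabla u+\int\widehat{W}_{n}u_{n}u\to\int|\nabla u|^{2}+\widehat{W}\int u^{2}=\|u\|_{1,\widehat{W}}^{2}$, the first term by the weak convergence $\nabla u_{n}\rightharpoonup\nabla u$ in $L^{2}$ and the second by the variant above. Hence the right-hand side converges to $2\|u\|_{1,\widehat{W}}^{2}-\|u\|_{1,\widehat{W}}^{2}=\|u\|_{1,\widehat{W}}^{2}$.

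The only point requiring care — hence the main obstacle — is that $(\widehat{W}_{n})$ converges to $\widehat{W}$ only locally uniformly, not uniformly on $\mathbb{R}^{3}$; the potential terms must therefore be split into a bounded region, where local uniform convergence applies, and a tail, where one exploits the $L^{2}$-smallness of $u$ at infinity together with the uniform bounds $\widehat{W}_{n}\leq\sup_{\mathbb{R}^{3}}W$ and $\|u_{n}\|_{L^{2}}\leq C$. Everything else is routine.
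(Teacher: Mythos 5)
Your proof is correct and rests on the same ingredients as the paper's: the uniform continuity of $W$ gives $\widehat{W}_{n}\to\widehat{W}$ uniformly on balls, the potential terms are split into a ball and a tail where one uses the $L^{2}$-smallness of the fixed function $u$ together with the global bounds on $W$ and on $\Vert u_{n}\Vert_{L^{2}}$, and weak convergence handles the gradient and cross terms. The only (cosmetic) difference is organizational: the paper subtracts the constant-weight identity and bounds the error by $C\Vert(\widehat{W}_{n}-\widehat{W})u\Vert_{L^{2}}$ via Cauchy--Schwarz, whereas you expand $\Vert u_{n}\Vert^{2}-\Vert u_{n}-u\Vert^{2}=2\langle u_{n},u\rangle-\Vert u\Vert^{2}$ and pass to the limit term by term.
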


\begin{proof}
The argument is similar for both equalities. We prove the second one. Since
$(u_{n})$ is bounded in $L^{2}(\mathbb{R}^{3})$ there exists $C>2\left\Vert
u\right\Vert _{L^{2}(\mathbb{R}^{3})}$ such that%
\begin{align*}
&  \left\vert \left\Vert u_{n}\right\Vert _{1,\widehat{W}_{n}}^{2}-\left\Vert
u_{n}-u\right\Vert _{1,\widehat{W}_{n}}^{2}-\left\Vert u\right\Vert
_{1,\widehat{W}}^{2}\right\vert \\
&  \leq\left\vert \left\Vert u_{n}\right\Vert _{1,\widehat{W}}^{2}-\left\Vert
u_{n}-u\right\Vert _{1,\widehat{W}}^{2}-\left\Vert u\right\Vert _{1,\widehat
{W}}^{2}\right\vert +\int_{\mathbb{R}^{3}}\left\vert (\widehat{W}_{n}%
-\widehat{W})(2u_{n}u-u^{2})\right\vert \\
&  \leq o(1)+C\left\Vert (\widehat{W}_{n}-\widehat{W})u\right\Vert
_{L^{2}(\mathbb{R}^{3})}.
\end{align*}
Given $\varepsilon>0$ we fix $R>0$ such that%
\[
\int_{\left\vert x\right\vert \geq R}(\widehat{W}_{n}-\widehat{W})^{2}%
u^{2}\leq(2\sup_{x\in\mathbb{R}^{3}}W)^{2}\int_{\left\vert x\right\vert \geq
R}u^{2}<\varepsilon^{2}.
\]
Since $W$ is uniformly continuous, there exists $\delta>0$ such that%
\[
\left\vert W(\varepsilon_{n}x+\xi_{n})-W(\xi_{n})\right\vert <\frac
{\varepsilon}{C}\text{ \ \ \ if }\left\vert x\right\vert <\frac{\delta
}{\varepsilon_{n}}.
\]
Fix $n_{0}\in\mathbb{N}$ such that $|W(\xi_{n})-\widehat{W}|<\frac
{\varepsilon}{C}$ \ and $\ \frac{\delta}{\varepsilon_{n}}>R$ \ if $n\geq
n_{0}.$ Then,%
\[
\int_{\left\vert x\right\vert \leq R}(\widehat{W}_{n}-\widehat{W})^{2}%
u^{2}<\varepsilon^{2}\text{ \ \ for all }n\geq n_{0}.
\]
Therefore,
\[
\lim_{n\rightarrow\infty}\left\Vert (\widehat{W}_{n}-\widehat{W})u\right\Vert
_{L^{2}(\mathbb{R}^{3})}=0.
\]
This concludes the proof.
\end{proof}

\begin{lemma}
\label{lemG}Let $(z_{n})$ be a sequence in $\mathbb{R}^{N}.$ Then, after
passing to a subsequence, there exist a closed subgroup $\Gamma$ of $G$ and a
sequence $(\zeta_{n})$ in $\mathbb{R}^{N}$such that\newline(a) \ $\left(
\text{\emph{dist}}(Gz_{n},\zeta_{n})\right)  $ is bounded,\newline(b)
\ $G_{\zeta_{n}}=\Gamma,$\newline(c) \ if $\left\vert G/\Gamma\right\vert
<\infty$ then $\left\vert g\zeta_{n}-\tilde{g}\zeta_{n}\right\vert
\rightarrow\infty$ for all $g,\tilde{g}\in G$ with $\tilde{g}g^{-1}%
\notin\Gamma$,\newline(d) \ if $\left\vert G/\Gamma\right\vert =\infty,$ there
exists a closed subgroup $\Gamma^{\prime}$ of $G$ such that $\Gamma
\subset\Gamma^{\prime},$ $\left\vert G/\Gamma^{\prime}\right\vert =\infty$ and
$\left\vert g\zeta_{n}-\tilde{g}\zeta_{n}\right\vert \rightarrow\infty$ for
all $g,\tilde{g}\in G$ with $\tilde{g}g^{-1}\notin\Gamma^{\prime}$.
\end{lemma}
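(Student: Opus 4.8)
The plan is to produce $\zeta_n$ as a ``maximal symmetrization'' of $z_n$ and then to read off the separation from one elementary coercivity estimate. I use throughout that $G$, being closed in $O(N)$, is a compact Lie group acting orthogonally on $\mathbb{R}^N$: every orbit lies on a sphere about $0$, one has $G_x=G_{x/|x|}$ for $x\neq0$, and (restricting the action to the unit sphere, cf.\ \cite{tD}) there are only finitely many conjugacy classes of isotropy subgroups $G_x$. I shall also use the standard fact that the pointwise stabilizer of any linear subspace which arises as some $\mathrm{Fix}(H)$ is again an isotropy subgroup, with the same fixed-point subspace.

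For a closed subgroup $H\le G$ put $d_n(H):=\mathrm{dist}(Gz_n,\mathrm{Fix}(H))=\mathrm{dist}(z_n,G\,\mathrm{Fix}(H))$; this is nonincreasing in $H$ and conjugation invariant. Passing to a subsequence I may assume that $d_n(H)$ has a limit in $[0,+\infty]$ for each of the finitely many conjugacy classes of isotropy subgroups. Let $\mathcal{F}$ be the family of isotropy subgroups with finite such limit. It contains the trivial group, is conjugation invariant, and --- because two mutually subconjugate closed subgroups of a compact Lie group are conjugate --- its finitely many conjugacy classes have a maximal element; fix a representative $\Gamma$, so that no isotropy subgroup in $\mathcal{F}$ properly contains $\Gamma$; in particular $\Gamma$ equals the pointwise stabilizer of $\mathrm{Fix}(\Gamma)$ (that stabilizer is an isotropy subgroup in $\mathcal{F}$ containing $\Gamma$). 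Now choose $w_n\in G\,\mathrm{Fix}(\Gamma)$ with $|z_n-w_n|=d_n(\Gamma)$, write $w_n\in\mathrm{Fix}(g_n\Gamma g_n^{-1})$, and set $\zeta_n:=g_n^{-1}w_n\in\mathrm{Fix}(\Gamma)$. Then $\mathrm{dist}(Gz_n,\zeta_n)=\mathrm{dist}(Gz_n,w_n)\le|z_n-w_n|=d_n(\Gamma)$ is bounded, giving (a); and if $G_{w_n}$ strictly contained $g_n\Gamma g_n^{-1}$ then $g_n^{-1}G_{w_n}g_n$ would be an isotropy subgroup whose fixed set contains $\zeta_n$ (hence with $d_n(\cdot)\le\mathrm{dist}(Gz_n,\zeta_n)$ bounded, so belonging to $\mathcal{F}$) and properly containing $\Gamma$, against maximality; therefore $G_{\zeta_n}=g_n^{-1}G_{w_n}g_n=\Gamma$, giving (b).

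For (c) and (d) I would prove the single statement that $|h\zeta_n-\zeta_n|\to\infty$ for every fixed $h\in G\setminus\Gamma$. Since $|g\zeta_n-\tilde g\zeta_n|=|(\tilde g^{-1}g)\zeta_n-\zeta_n|$ and $g\zeta_n\neq\tilde g\zeta_n$ precisely when $\tilde g^{-1}g\notin G_{\zeta_n}=\Gamma$, this is exactly the separation of the distinct points of the orbit $G\zeta_n$, which yields (c) at once and yields (d) with $\Gamma':=\Gamma$ (the hypothesis there being just $|G/\Gamma|=\infty$). To prove it, fix $h\notin\Gamma$, split $\mathbb{R}^N=\mathrm{Fix}(h)\oplus\mathrm{Fix}(h)^{\perp}$ ($h$-invariant since $h\in O(N)$) and write $\zeta_n=p_n+q_n$ accordingly; then $h\zeta_n-\zeta_n=(h-\mathrm{id})q_n$, and $(h-\mathrm{id})$ is invertible on $\mathrm{Fix}(h)^{\perp}$, so $|h\zeta_n-\zeta_n|\ge c_h\,|q_n|$ with $c_h>0$, where $|q_n|=\mathrm{dist}(\zeta_n,\mathrm{Fix}(h))$. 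If $|q_n|$ were bounded along a subsequence, then --- using that $\zeta_n$ lies in the subspace $\mathrm{Fix}(\Gamma)$ and the elementary fact that $\mathrm{dist}(p,U\cap V)\le C(U,V)\,\mathrm{dist}(p,V)$ for $p\in U$ --- the distance of $\zeta_n$ to $\mathrm{Fix}(\Gamma)\cap\mathrm{Fix}(h)=\mathrm{Fix}(\langle\Gamma,h\rangle)$ would be bounded, hence by the triangle inequality and (a) so would $d_n(K)$, where $K$ is the pointwise stabilizer of $\mathrm{Fix}(\langle\Gamma,h\rangle)$; but $K$ is an isotropy subgroup in $\mathcal{F}$ with $K\supseteq\langle\Gamma,h\rangle\supsetneq\Gamma$ (strict because, $\Gamma$ being the pointwise stabilizer of $\mathrm{Fix}(\Gamma)$ and $h\notin\Gamma$, $\mathrm{Fix}(h)$ does not contain $\mathrm{Fix}(\Gamma)$), contradicting the maximality of $\Gamma$. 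Hence $|q_n|\to\infty$ and $|h\zeta_n-\zeta_n|\to\infty$.

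I expect the delicate part to be the second paragraph: the finiteness of orbit types and the maximality selection of $\Gamma$, together with the bookkeeping on fixed-point subspaces and pointwise stabilizers under the compact-group action (e.g.\ that $G\,\mathrm{Fix}(\Gamma)$ is the union of the $\mathrm{Fix}(g\Gamma g^{-1})$ and that all the subgroups $K$ produced above are genuine isotropy subgroups). The coercivity argument in the last paragraph is then routine. I would also double-check the coset convention in the displayed conditions of (c)--(d): what the argument delivers is the separation of distinct orbit points, i.e.\ $|g\zeta_n-\tilde g\zeta_n|\to\infty$ whenever $g\Gamma\neq\tilde g\Gamma$, which is the intended content.
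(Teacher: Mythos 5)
The paper itself gives no proof of this lemma: it just cites Lemma 3.2 of \cite{ccs2}, so your argument has to stand on its own. Its overall strategy --- choose $\Gamma$ maximal among those subgroups whose (conjugated) fixed-point subspaces stay at bounded distance from $Gz_{n}$, take $\zeta_{n}$ as a nearest point of $G\,\mathrm{Fix}(\Gamma)$, and deduce the separation from the invertibility of $h-\mathrm{id}$ on $\mathrm{Fix}(h)^{\perp}$ together with the two-subspace inequality --- is reasonable, and if it works it even gives (d) in the stronger form $\Gamma'=\Gamma$. But there is a genuine gap at the step you yourself label ``standard'': it is \emph{not} true in general that the pointwise stabilizer of a subspace of the form $\mathrm{Fix}(H)$ is an isotropy subgroup. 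For $G=SO(3)$ acting on $\mathbb{R}^{3}$ and $H=\{e\}$ one has $\mathrm{Fix}(H)=\mathbb{R}^{3}$, whose pointwise stabilizer is the trivial group, and no point of $\mathbb{R}^{3}$ has trivial isotropy. Your contradiction in the (c)/(d) step needs the subgroup $K$ (the pointwise stabilizer of $\mathrm{Fix}(\Gamma)\cap\mathrm{Fix}(h)$) to lie in the family $\mathcal{F}$, which by construction consists of isotropy subgroups; without the ``standard fact'' the maximality of $\Gamma$ yields no contradiction, so the heart of (c)/(d) is unproved as written. (In the paper's actual setting $G\leq O(3)$ acting on $\mathbb{R}^{3}$ the needed instances can be checked case by case, but that is not the argument you gave, and the lemma is stated in $\mathbb{R}^{N}$.)

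The gap is repairable with a modification of your own setup: let $\mathcal{F}$ consist not of isotropy subgroups but of pointwise stabilizers of linear subspaces, i.e.\ subgroups of the form $G_{x_{1}}\cap\cdots\cap G_{x_{N}}$. These are exactly the isotropy subgroups of the diagonal $G$-action on $(\mathbb{R}^{N})^{N}$, hence fall into finitely many conjugacy classes, so your subsequence extraction and the choice of a maximal $\Gamma$ still make sense; then $K$ belongs to the family by definition, contains $h\notin\Gamma$, and the maximality argument closes, while in step (b) the pointwise stabilizer of $\mathrm{Fix}(G_{\zeta_{n}})$ plays the role of your $g_{n}^{-1}G_{w_{n}}g_{n}$ (and one recovers a posteriori that $\Gamma$ is an isotropy subgroup). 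Step (b) also has a smaller lapse as written: the subgroup $g_{n}^{-1}G_{w_{n}}g_{n}$ varies with $n$, so the bound $d_{n}(\cdot)\leq\mathrm{dist}(Gz_{n},\zeta_{n})$ at the single index $n$ does not place it in $\mathcal{F}$; you must use the finiteness of conjugacy classes and a pigeonhole/further subsequence to get a \emph{fixed} class violating maximality. Two harmless points besides: $d_{n}(H)$ is nondecreasing, not nonincreasing, in $H$; and your reading of (c)--(d) as separation of distinct orbit points, i.e.\ $|g\zeta_{n}-\tilde g\zeta_{n}|\to\infty$ whenever $g\Gamma\neq\tilde g\Gamma$, is indeed what the application in Proposition \ref{epsilonPS} requires, so the coset convention in the displayed statement should be read that way.
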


\begin{proof}
See Lemma 3.2 in \cite{ccs2}.
\end{proof}

Set
\[
M_{G,W}:=\left\{  y\in\mathbb{R}^{3}:(\#Gy)W^{3/2}(y)=\ell_{G,W}\right\}
\]
Abusing notation we write again $G_{i}$ and $M_{i}$ for the groups and the
sets defined as in Section \ref{embedding} but now for $W$ instead of $V$. So
the value of $W$ on $M_{i}$ is constant and we denote it by $W_{i}.$ We fix
$\widehat{\rho}>0$ such that%
\begin{equation}%
\begin{array}
[c]{ll}%
\left\vert y-gy\right\vert >2\widehat{\rho} & \text{if \ }gy\neq y\in
M_{G,W},\\
\text{dist}(M_{i},M_{j})>2\widehat{\rho} & \text{if \ }i\neq j,
\end{array}
\label{robar}%
\end{equation}
For $\rho\in(0,\widehat{\rho}),$ let%
\[
M_{i}^{\rho}:=\{y\in\mathbb{R}^{3}:\text{dist}(y,M_{i})\leq\rho,\text{
\ }G_{y}=gG_{i}g^{-1}\text{ for some }g\in G\},
\]
and for each $\xi\in M_{i}^{\rho}$ and $\varepsilon>0,$ define%
\[
\theta_{\varepsilon,\xi}(x):=%
{\textstyle\sum\limits_{g\xi\in G\xi}}
\omega_{i}\left(  \frac{x-g\xi}{\varepsilon}\right)  ,
\]
where $\omega_{i}$ is unique positive ground state of problem (\ref{limlambda}%
) with $\lambda:=W_{i}$ which is radially symmetric with respect to the
origin. Set%
\[
\Theta_{\rho,\varepsilon}:=\{\theta_{\varepsilon,\xi}:\xi\in M_{1}^{\rho}%
\cup\cdot\cdot\cdot\cup M_{m}^{\rho}\}.
\]
The following holds.

\begin{proposition}
\label{epsilonPS}Let $\varepsilon_{n}>0$ and $v_{n}\in H^{1}(\mathbb{R}%
^{3},\mathbb{R})^{G}$ be such that%
\begin{equation}
\varepsilon_{n}\rightarrow0,\text{ \ \ }\varepsilon_{n}^{-3}J_{\varepsilon
_{n},W}(v_{n})\rightarrow\widehat{c},\text{ \ \ }\varepsilon_{n}%
^{-3}\left\Vert \nabla_{\varepsilon_{n}}J_{\varepsilon_{n},W}(v_{n}%
)\right\Vert _{\varepsilon_{n},W}^{2}\rightarrow0, \label{cs}%
\end{equation}
where $\widehat{c}:=\lim\inf_{\varepsilon\rightarrow0}\varepsilon
^{-3}c_{\varepsilon,W}^{G}$ \ and $\nabla_{\varepsilon_{n}}J_{\varepsilon
_{n},W}$ is the gradient of $J_{\varepsilon_{n},W}$ with respect to the scalar
product $\left\langle \cdot,\cdot\right\rangle _{\varepsilon_{n},W}.$ Then,
passing to a subsequence, there exist an $i\in\{1,...,m\}$ and a sequence
$(\xi_{n})$ in $\mathbb{R}^{3}$ such that\newline(i) \ $\ G_{\xi_{n}}=G_{i}%
,$\newline(ii) $\ \xi_{n}\rightarrow\xi\in M_{i},$\newline(iii) $\ \varepsilon
_{n}^{-3}\left\Vert \left\vert v_{n}\right\vert -\theta_{\varepsilon_{n}%
,\xi_{n}}\right\Vert _{\varepsilon_{n},W}^{2}\rightarrow0,$ \newline(iv)
\ $\widehat{c}=\lim_{\varepsilon\rightarrow0}\varepsilon^{-3}c_{\varepsilon
,W}^{G}=\ell_{G,W}E_{1}.$
\end{proposition}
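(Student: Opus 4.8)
The strategy is a concentration-compactness analysis for the rescaled problems. First I would change variables, setting $\widetilde v_n(x):=v_n(\varepsilon_n x)$; as in the proof of Proposition \ref{palaissmale} one has $\varepsilon_n^{-3}J_{\varepsilon_n,W}(v_n)=J_{1,\widetilde W_n}(\widetilde v_n)$ with $\widetilde W_n(x):=W(\varepsilon_n x)$, and similarly for the gradients, so \eqref{cs} becomes a Palais--Smale-type condition at level $\widehat c$ for the functionals $J_{1,\widetilde W_n}$ with shifting potentials. Using the Nehari identity and the lower bound in Lemma \ref{cotas}, $(\widetilde v_n)$ is bounded in $H^1(\mathbb{R}^3,\mathbb{R})$, and $\mathbb{D}(\widetilde v_n)$ is bounded below away from $0$, so $\|\widetilde v_n\|_{L^{12/5}}$ does not go to zero; hence by a concentration (Lions-type) argument there is a sequence $(y_n)\subset\mathbb{R}^3$ along which the shifted functions $\widetilde v_n(\cdot+y_n)$ have a nonzero weak limit. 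The points $\varepsilon_n y_n$ play the role of the concentration centers $\xi_n$ (after the orbit bookkeeping below).

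Next I would use the $G$-symmetry together with Lemma \ref{lemG}: apply that lemma to the sequence $z_n:=\varepsilon_n y_n$ to obtain, after passing to a subsequence, a closed subgroup $\Gamma\leq G$ and points $\zeta_n$ with $G_{\zeta_n}=\Gamma$, with $\mathrm{dist}(Gz_n,\zeta_n)$ bounded, and with the distinct translates $g\zeta_n$ separating (either finitely many, mutually diverging, or the infinite-index alternative). Replacing $\zeta_n/\varepsilon_n$ for $y_n$, the $G$-invariance of $v_n$ forces bumps of comparable mass to sit over the whole orbit $G(\zeta_n/\varepsilon_n)$. If the orbit is infinite (case (d) of Lemma \ref{lemG}), the energy contributed by the infinitely many diverging translates is $+\infty$, contradicting $J_{1,\widetilde W_n}(\widetilde v_n)\to\widehat c<\infty$; so the orbit is finite, say of cardinality $|G/\Gamma|=\#G\xi$. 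On each diverging translate, Lemma \ref{wcont} lets me replace the potential $\widetilde W_n$ near $g\zeta_n$ by the constant $\widehat W:=\lim W(\zeta_n)$, so the limiting profile on each bump solves the autonomous equation $-\Delta\omega+\widehat W\omega=(\frac1{|x|}\ast\omega^2)\omega$, i.e. is a translate of $\pm\omega_{\widehat W}$; by the Brezis--Lieb-type splitting (for the $\|\cdot\|$ part via Lemma \ref{wcont}, for $\mathbb{D}$ via \eqref{eq:HLS} and a standard nonlocal Brezis--Lieb lemma) one gets
\[
\widehat c \;\ge\; (\#G\xi)\,\widehat W^{3/2}E_1 \;\ge\; \ell_{G,W}E_1.
\]
Combined with the upper bound $\limsup_{\varepsilon\to0}\varepsilon^{-3}c_{\varepsilon,W}^G\le\ell_{G,W}E_1$ from Lemma \ref{cotas}, all inequalities are equalities: $\widehat c=\ell_{G,W}E_1$, which gives (iv); moreover $(\#G\xi)\widehat W^{3/2}=\ell_{G,W}$ and $\Gamma$ is conjugate to some $G_i$, while $\widehat W=W_i$ forces, by the compactness of the sublevel set \eqref{compW} and continuity of $W$, that $\xi_n\to\xi\in M_i$ after a further harmless translation putting $G_{\xi_n}=G_i$ exactly (replacing the $\zeta_n$'s by genuine orbit points of $M_i$), giving (i) and (ii). Finally, equality in the Brezis--Lieb splitting forces strong convergence of the remainder and of each bump to its limiting profile: since $\widehat c$ is attained exactly by the sum of $\#G\xi$ copies of the ground state, the weak limit is (up to sign on each component, but $|v_n|\ge0$ fixes the sign) $\theta_{1,\zeta_n}$ in the rescaled variable, i.e. $\||\widetilde v_n| - \sum_{g\xi\in G\xi}\omega_i(\cdot-\zeta_n/\varepsilon_n\cdot g\text{-shift})\|_{1,\widetilde W_n}\to0$; undoing the rescaling yields (iii).

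The main obstacle I expect is the orbit-bookkeeping step: controlling how the concentration center $y_n$ interacts with the $G$-action so that one genuinely recovers a \emph{full} finite orbit of bumps (and rules out the infinite-orbit case on energy grounds), and then matching the resulting isotropy group and potential value precisely with one of the strata $M_i$ so that $\xi_n$ converges \emph{into} $M_i$ with $G_{\xi_n}=G_i$. This is where Lemma \ref{lemG} and the compactness assumption \eqref{compW} do the essential work; the nonlocal Brezis--Lieb splitting for $\mathbb{D}$ and the potential-replacement via Lemma \ref{wcont} are technical but routine by comparison. Everything else — boundedness, nonvanishing, the energy lower bound per bump equal to $\widehat W^{3/2}E_1$ via $E_\lambda=\lambda^{3/2}E_1$ — follows the standard template for semiclassical concentration results.
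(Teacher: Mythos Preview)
Your plan is the paper's own argument: rescale, locate a concentration point via Lions' lemma, upgrade it to a full $G$-orbit of diverging bumps via Lemma~\ref{lemG}, rule out infinite orbits on energy grounds, identify each limiting profile as the ground state by Lemma~\ref{wcont} and uniqueness, and then squeeze with Lemma~\ref{cotas} and the compactness assumption \eqref{compW} to obtain $\xi_n\to\xi\in M_i$ and force the remainder to zero.

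There is one genuine slip in the bookkeeping. You apply Lemma~\ref{lemG} to $z_n:=\varepsilon_n y_n$, i.e.\ in the \emph{original} variable, and then pass back to the rescaled variable via $\zeta_n/\varepsilon_n$. But conclusion~(a) of the lemma, $\mathrm{dist}(Gz_n,\zeta_n)\le C$, then only yields $\mathrm{dist}(Gy_n,\zeta_n/\varepsilon_n)\le C/\varepsilon_n\to\infty$ in the rescaled variable, so you lose any control on the weak limit of $\widetilde v_n(\cdot+\zeta_n/\varepsilon_n)$; worse, since the $\varepsilon_n y_n$ turn out to stay bounded, the lemma cannot simultaneously produce $\zeta_n$ with $G_{\zeta_n}=\Gamma\subsetneq G$ and $|g\zeta_n-\tilde g\zeta_n|\to\infty$ for $\tilde g g^{-1}\notin\Gamma$, so its output degenerates in that scale. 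The paper applies Lemma~\ref{lemG} directly to the \emph{rescaled} concentration points (your $y_n$), obtains $\zeta_n$ with $G_{\zeta_n}=\Gamma$ and orbit separations diverging in the rescaled variable, and only afterward sets $\xi_n:=\varepsilon_n\zeta_n$. With that single correction your outline coincides with the paper's proof; the final recentering by the translation $z_0$ of the ground state (when $\Gamma$ is trivial) is exactly the ``further harmless translation'' you allude to.
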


\begin{proof}
A standard argument shows that the sequence $(\varepsilon_{n}^{-3}\Vert
v_{n}\Vert_{\varepsilon_{n},W}^{2})$ is bounded and that%
\[
\lim_{n\rightarrow\infty}\varepsilon_{n}^{-3}\Vert v_{n}\Vert_{\varepsilon
_{n},W}^{2}=\lim_{n\rightarrow\infty}\varepsilon_{n}^{-5}\mathbb{D}(v_{n}%
)={4}\widehat{c}=:c>0.
\]
Let $\widetilde{v}_{n}\in H^{1}(\mathbb{R}^{3},\mathbb{R})^{G}$ be given by
$\widetilde{v}_{n}(z):=v_{n}(\varepsilon_{n}z).$ Then,
\[
\Vert\widetilde{v}_{n}\Vert_{1,W_{_{n}}}^{2}=\varepsilon^{-3}\Vert{v}_{n}%
\Vert_{\varepsilon_{n},W}^{2}\text{ \ \ \ and \ \ \ }\mathbb{D}(\widetilde
{v}_{n})=\varepsilon_{n}^{-5}\mathbb{D}(v_{n}),
\]
where $W_{n}(z):=W(\varepsilon_{n}z).$ Set%
\[
\delta:=\limsup_{n\rightarrow\infty}\sup_{y\in\mathbb{R}^{3}}\int
_{B(y,1)}\left\vert \widetilde{v}_{n}\right\vert ^{2}.
\]
Since $c>0,$ Lions' lemma \cite[Lemma 1.21]{w}, together with inequality
(\ref{D}), yields that $\delta>0$. Choose $z_{n}\in\mathbb{R}^{3}$ such that
\[
\int_{B(z_{n},1)}\left\vert \widetilde{v}_{n}\right\vert ^{2}\geq\frac{\delta
}{2}%
\]
and replace $(z_{n})$ by a sequence $(\zeta_{n})$ having the properties stated
in Lemma \ref{lemG}. Set $\widehat{v}_{n}(z):=\widetilde{v}_{n}(z+\zeta_{n}).$
After passing to a subsequence, we may assume that $\widehat{v}_{n}%
\rightharpoonup\widehat{v}$ weakly in $H^{1}(\mathbb{R}^{3},\mathbb{R)},$
$\widehat{v}_{n}(x)\rightarrow\widehat{v}(x)$ a.e. on $\mathbb{R}^{3}$ and
$\widehat{v}_{n}\rightarrow\widehat{v}$ in $L_{loc}^{2}(\mathbb{R}%
^{3},\mathbb{R)}.$ Choosing $C\geq$ dist$\left(  \zeta_{n},Gz_{n}\right)  $
for all $n,$ we obtain%
\[
\int_{B(0,C+1)}\left\vert \widehat{v}_{n}\right\vert ^{2}=\int_{B(\zeta
_{n},C+1)}\left\vert \widetilde{v}_{n}\right\vert ^{2}\geq\int_{B(z_{n}%
,1)}\left\vert \widetilde{v}_{n}\right\vert ^{2}\geq\frac{\delta}{2}.
\]
Therefore, $\widehat{v}\neq0.$ \newline Set $\xi_{n}:=\varepsilon_{n}\zeta
_{n}$ and $\widehat{W}_{n}(x):=W(\varepsilon_{n}x+\xi_{n})$. Since $W$ is
bounded, a subsequence of $W(\xi_{n})$ converges. We set $\widehat{W}%
:=\lim_{n\rightarrow\infty}W(\xi_{n}).$ The weak continuity of $\mathbb{D}%
^{\prime}$ \cite[Lemma 3.5]{a}, together with Lemma \ref{wcont} and assumption
(\ref{cs}) imply that $\widehat{v}$ is a solution to problem (\ref{limlambda})
with $\lambda:=\widehat{W}.$ \newline Since $v_{n}$ and $W$ are $G$-invariant
we have that $\widehat{v}_{n}(g^{-1}x)=v_{n}(\varepsilon_{n}x+g\xi_{n}),$
$\widehat{W}_{n}(g^{-1}x)=W(\varepsilon_{n}x+g\xi_{n}),$ and $\widehat
{W}:=\lim_{n\rightarrow\infty}W(g\xi_{n})$ for each $g\in G.$ Fix
$g_{1},...,g_{k}\in G$ such that $\left\vert g_{i}\zeta_{n}-g_{j}\zeta
_{n}\right\vert \rightarrow\infty$ if $i\neq j.$ Then,
\begin{equation}
\widehat{v}_{n}g_{j}^{-1}-%
{\textstyle\sum\limits_{i=j+1}^{k}}
\widehat{v}g_{i}^{-1}(\cdot-g_{i}\zeta_{n}+g_{j}\zeta_{n})\rightharpoonup
\widehat{v}g_{j}^{-1}\label{wc}%
\end{equation}
weakly in $H^{1}(\mathbb{R}^{3},\mathbb{R}).$ Applying Lemma \ref{wcont} we
obtain%
\begin{align*}
&  \left\Vert \widehat{v}_{n}g_{j}^{-1}-%
{\textstyle\sum\limits_{i=j+1}^{k}}
\widehat{v}g_{i}^{-1}\left(  \cdot-g_{i}\zeta_{n}+g_{j}\zeta_{n}\right)
\right\Vert _{1,\widehat{W}_{n}g_{j}^{-1}}^{2}\\
&  =\left\Vert \widehat{v}_{n}g_{j}^{-1}-\widehat{v}g_{j}^{-1}-%
{\textstyle\sum\limits_{i=j+1}^{k}}
\widehat{v}g_{i}^{-1}\left(  \cdot-g_{i}\zeta_{n}+g_{j}\zeta_{n}\right)
\right\Vert _{1,\widehat{W}_{n}g_{j}^{-1}}^{2}+\left\Vert \widehat{v}%
g_{j}^{-1}\right\Vert _{1,\widehat{W}}^{2}+o(1),
\end{align*}
and performing the change of variable $y=\varepsilon_{n}x+g_{j}\xi_{n}$ we
conclude that%
\begin{align*}
&  \varepsilon_{n}^{-3}\left\Vert v_{n}-%
{\textstyle\sum\limits_{i=j+1}^{k}}
\widehat{v}g_{i}^{-1}\left(  \frac{\cdot-g_{i}\xi_{n}}{\varepsilon_{n}%
}\right)  \right\Vert _{\varepsilon_{n},W}^{2}\\
&  =\varepsilon_{n}^{-3}\left\Vert v_{n}-%
{\textstyle\sum\limits_{i=j}^{k}}
\widehat{v}g_{i}^{-1}\left(  \frac{\cdot-g_{i}\xi_{n}}{\varepsilon_{n}%
}\right)  \right\Vert _{\varepsilon_{n},W}^{2}+\left\Vert \widehat
{v}\right\Vert _{1,\widehat{W}}^{2}+o(1).
\end{align*}
Iterating these equalities we conclude that%
\[
4\widehat{c}=\lim_{n\rightarrow\infty}\varepsilon_{n}^{-3}\left\Vert
v_{n}\right\Vert _{\varepsilon_{n},W}^{2}=\lim_{n\rightarrow\infty}%
\varepsilon_{n}^{-3}\left\Vert v_{n}-\sum_{i=1}^{k}\widehat{v}g_{i}%
^{-1}\left(  \frac{\cdot-g_{i}\xi_{n}}{\varepsilon_{n}}\right)  \right\Vert
_{\varepsilon_{n},W}^{2}+k\left\Vert \widehat{v}\right\Vert _{1,\widehat{W}%
}^{2}.
\]
This implies that $4\widehat{c}\geq k\left\Vert \widehat{v}\right\Vert
_{1,\widehat{W}}^{2}$ which, together with property \emph{(d)} in Lemma
\ref{lemG}, implies $\left\vert G/\Gamma\right\vert <\infty.$ Property
\emph{(c)} allows us to take $k:=\left\vert G/\Gamma\right\vert $. Then,
property \emph{(b)} and Lemma \ref{cotas} yield
\begin{align*}
\ell_{G,W}E_{1} &  \leq\lim_{n\rightarrow\infty}(\#G\xi_{n})W^{3/2}(\xi
_{n})E_{1}=\left\vert G/\Gamma\right\vert \widehat{W}^{3/2}E_{1}\\
&  \leq\left\vert G/\Gamma\right\vert \frac{1}{4}\left\Vert \widehat
{v}\right\Vert _{1,\widehat{W}}^{2}\leq\widehat{c}\leq\limsup_{\varepsilon
\rightarrow0}\varepsilon^{-3}c_{\varepsilon,W}^{G}\leq\ell_{G,W}E_{1}.
\end{align*}
This proves \emph{(iv)} and gives also
\begin{equation}
\lim_{n\rightarrow\infty}\varepsilon_{n}^{-3}\left\Vert v_{n}-%
{\textstyle\sum\limits_{i=1}^{k}}
\widehat{v}g_{i}^{-1}\left(  \frac{\cdot-g_{i}\xi_{n}}{\varepsilon_{n}%
}\right)  \right\Vert _{\varepsilon_{n},W}^{2}=0.\label{aprox}%
\end{equation}
Moreover, $(\#G\xi_{n})W^{3/2}(\xi_{n})\leq\ell_{G,W}+\alpha$ for $n$ large
enough. Thus, assumption (\ref{comp}) implies, after passing to a subsequence,
that $\xi_{n}\rightarrow\xi.$ Hence, $W(\xi)=\widehat{W}$ and%
\[
\ell_{G,W}E_{1}\leq(\#G\xi)W(\xi)E_{1}\leq\left\vert G/\Gamma\right\vert
\widehat{W}^{3/2}E_{1}\leq\left\vert G/\Gamma\right\vert \frac{1}{4}\left\Vert
\widehat{v}\right\Vert _{1,\widehat{W}}^{2}\leq\ell_{G,W}E_{1}.
\]
We conclude that $\xi\in M_{i}$ for some $i=1,...,m$, as claimed in
\emph{(ii)}. Then, $\widehat{W}=W_{i}$, $\Gamma=G_{\xi}=gG_{i}g^{-1}$ for some
$g\in G,$ and $\widehat{v}$ is a ground state of problem (\ref{limlambda})
with $\lambda=W_{i}$. \newline Since the ground state is unique up to sign and
translation we must have that $\widehat{v}(z)=\pm\omega_{i}(z-z_{0})$ for some
$z_{0}\in\mathbb{R}^{3}.$ Observe that $\widehat{v}$ is $\Gamma$-invariant.
So, if $\Gamma$ is nontrivial, then $z_{0}=0$ and, since $\omega_{i}$ is
radial, equation (\ref{aprox}) becomes \emph{(iii)}. If, on the other hand,
$\Gamma$ is the trivial group, we replace $\xi_{n}$ by $\xi_{n}^{\prime}%
:=\xi_{n}+\varepsilon_{n}z_{0}.$ Since $G\xi_{n}\cong G$ and $\varepsilon
_{n}\rightarrow0,$ $\xi_{n}^{\prime}$ has the same properties as $\xi_{n}$ for
$n$ large enough. Moreover, since $\omega_{i}$ is radially symmetric,%
\[
\widehat{v}\left(  \frac{g^{-1}z-\xi_{n}}{\varepsilon_{n}}\right)  =\pm
\omega_{i}\left(  \frac{g^{-1}z-\xi_{n}^{\prime}}{\varepsilon_{n}}\right)
=\pm\omega_{i}\left(  \frac{z-g\xi_{n}^{\prime}}{\varepsilon_{n}}\right)
\]
and, again, equation (\ref{aprox}) yields \emph{(iii)}. This completes the proof.
\end{proof}

\begin{proposition}
\label{unique}Given $\rho\in(0,\widehat{\rho})$ there exist $d_{\rho}%
>\ell_{G,W}E_{1}$ and $\varepsilon_{\rho}>0$ with the following property: For
every $\varepsilon\in(0,\varepsilon_{\rho})$ and every $v\in\mathcal{M}%
_{\varepsilon,W}^{G}$ with $J_{\varepsilon,W}(v)\leq\varepsilon^{3}d_{\rho}$
there exists precisely one $G$-orbit $G\xi_{\varepsilon,v}$ with
$\xi_{\varepsilon,v}\in M_{1}^{\rho}\cup\cdot\cdot\cdot\cup M_{m}^{\rho}$ such
that%
\[
\varepsilon^{-3}\left\Vert \left\vert v\right\vert -\theta_{\varepsilon
,\xi_{\varepsilon,v}}\right\Vert _{\varepsilon,W}^{2}=\min_{\theta\in
\Theta_{\rho,\varepsilon}}\left\Vert \left\vert v\right\vert -\theta
\right\Vert _{\varepsilon,W}^{2}.
\]

\end{proposition}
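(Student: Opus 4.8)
The plan is to argue by contradiction, using Proposition~\ref{epsilonPS} to supply the compactness. First I would record the preliminary fact that $\lim_{\varepsilon\to0}\varepsilon^{-3}c_{\varepsilon,W}^{G}=\ell_{G,W}E_{1}$: Lemma~\ref{cotas} already gives ``$\limsup\le$'', and applying Ekeland's variational principle to minimizing sequences for $c_{\varepsilon,W}^{G}$, together with a diagonal argument, produces along some $\varepsilon_{n}\to0$ functions satisfying the hypotheses of Proposition~\ref{epsilonPS}, whose part (iv) gives the reverse inequality. Now suppose the assertion fails for some $\rho\in(0,\widehat{\rho})$; taking $d=\ell_{G,W}E_{1}+\tfrac1n$ in the negation, I obtain $\varepsilon_{n}\to0$ and $v_{n}\in\mathcal{M}_{\varepsilon_{n},W}^{G}$ with $\varepsilon_{n}^{-3}J_{\varepsilon_{n},W}(v_{n})\le\ell_{G,W}E_{1}+\tfrac1n$ — hence, squeezing with $\varepsilon_{n}^{-3}c_{\varepsilon_{n},W}^{G}$, $\varepsilon_{n}^{-3}J_{\varepsilon_{n},W}(v_{n})\to\ell_{G,W}E_{1}$ — for which the minimum in the statement is not attained at exactly one $G$-orbit with representative in $M_{1}^{\rho}\cup\dots\cup M_{m}^{\rho}$.

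Next I would apply Ekeland's variational principle on the Nehari manifold $\mathcal{M}_{\varepsilon_{n},W}^{G}$ to produce $\widetilde{v}_{n}\in\mathcal{M}_{\varepsilon_{n},W}^{G}$ with $J_{\varepsilon_{n},W}(\widetilde{v}_{n})\le J_{\varepsilon_{n},W}(v_{n})$, $\varepsilon_{n}^{-3}\Vert\widetilde{v}_{n}-v_{n}\Vert_{\varepsilon_{n},W}^{2}\to0$, and — passing from the tangential gradient to the full one by the usual control of the Lagrange multiplier on the Nehari manifold — $\varepsilon_{n}^{-3}\Vert\nabla_{\varepsilon_{n}}J_{\varepsilon_{n},W}(\widetilde{v}_{n})\Vert_{\varepsilon_{n},W}^{2}\to0$. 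Since $\varepsilon_{n}^{-3}J_{\varepsilon_{n},W}(\widetilde{v}_{n})$ is squeezed between $\varepsilon_{n}^{-3}c_{\varepsilon_{n},W}^{G}\to\ell_{G,W}E_{1}$ and $\varepsilon_{n}^{-3}J_{\varepsilon_{n},W}(v_{n})\to\ell_{G,W}E_{1}$, Proposition~\ref{epsilonPS} applies to $(\widetilde{v}_{n})$: after a subsequence there are $i\in\{1,\dots,m\}$ and $\xi_{n}$ with $G_{\xi_{n}}=G_{i}$, $\xi_{n}\to\xi\in M_{i}$ (so $\xi_{n}\in M_{i}^{\rho}$ for $n$ large) and $\varepsilon_{n}^{-3}\Vert|\widetilde{v}_{n}|-\theta_{\varepsilon_{n},\xi_{n}}\Vert_{\varepsilon_{n},W}^{2}\to0$; as $\bigl\lvert|v_{n}|-|\widetilde{v}_{n}|\bigr\rvert\le|v_{n}-\widetilde{v}_{n}|$ and $w\mapsto|w|$ is continuous on $H^{1}(\mathbb{R}^{3},\mathbb{R})$, also $\varepsilon_{n}^{-3}\Vert|v_{n}|-\theta_{\varepsilon_{n},\xi_{n}}\Vert_{\varepsilon_{n},W}^{2}\to0$, whence $m_{n}:=\min_{\theta\in\Theta_{\rho,\varepsilon_{n}}}\Vert|v_{n}|-\theta\Vert_{\varepsilon_{n},W}^{2}=o(\varepsilon_{n}^{3})$. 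A rigidity step comes next: if $\Vert|v_{n}|-\theta_{\varepsilon_{n},\xi}\Vert_{\varepsilon_{n},W}^{2}\le2m_{n}$, then $\Vert\theta_{\varepsilon_{n},\xi}-\theta_{\varepsilon_{n},\xi_{n}}\Vert_{\varepsilon_{n},W}^{2}=o(\varepsilon_{n}^{3})$, and rescaling by $\varepsilon_{n}$ — using that by~(\ref{robar}) the bumps forming a cluster sit at mutual distance $\ge2\widehat{\rho}-2\rho$, and that clusters of different types $i\neq j$ stay macroscopically apart because $\operatorname{dist}(M_{i},M_{j})>2\widehat{\rho}$ — forces $\xi$ to have type $i$ and $\inf_{g\in G}|\xi-g\xi_{n}|=o(\varepsilon_{n})$. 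So every near-minimizer of $\xi\mapsto\Vert|v_{n}|-\theta_{\varepsilon_{n},\xi}\Vert_{\varepsilon_{n},W}^{2}$ lies in a relatively compact subset of $M_{i}^{\rho}$ shrinking to $\{\xi\}$; since the bumps of a cluster cannot merge (again by~(\ref{robar})), $\theta_{\varepsilon_{n},\,\cdot}$ is continuous on its closure, so the infimum $m_{n}$ is attained, say at $\bar{\xi}_{n}\in M_{i}^{\rho}$ with $\bar{\xi}_{n}\to\xi$.

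The remaining, and principal, difficulty will be the uniqueness of the minimizing orbit. The map $\xi\mapsto\theta_{\varepsilon_{n},\xi}$ is smooth (one differentiates $\omega_{i}$, smooth by elliptic regularity, not $W$), so $F_{n}(\xi):=\Vert|v_{n}|-\theta_{\varepsilon_{n},\xi}\Vert_{\varepsilon_{n},W}^{2}$ is $C^{1}$, and as $\bar{\xi}_{n}$ is an interior minimizer, $r_{n}:=|v_{n}|-\theta_{\varepsilon_{n},\bar{\xi}_{n}}$ (with $\Vert r_{n}\Vert_{\varepsilon_{n},W}^{2}=m_{n}=o(\varepsilon_{n}^{3})$) is orthogonal to $\partial_{\xi}\theta_{\varepsilon_{n},\bar{\xi}_{n}}$ along the directions tangent to $M_{i}^{\rho}$ at $\bar{\xi}_{n}$. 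For $\xi=\bar{\xi}_{n}+\varepsilon_{n}\sigma$ with $\sigma$ small in that subspace I would expand
\begin{align*}
\varepsilon_{n}^{-3}F_{n}(\bar{\xi}_{n}+\varepsilon_{n}\sigma)&=\varepsilon_{n}^{-3}m_{n}-2\varepsilon_{n}^{-3}\langle r_{n},\theta_{\varepsilon_{n},\bar{\xi}_{n}+\varepsilon_{n}\sigma}-\theta_{\varepsilon_{n},\bar{\xi}_{n}}\rangle_{\varepsilon_{n},W}\\
&\qquad+\varepsilon_{n}^{-3}\Vert\theta_{\varepsilon_{n},\bar{\xi}_{n}+\varepsilon_{n}\sigma}-\theta_{\varepsilon_{n},\bar{\xi}_{n}}\Vert_{\varepsilon_{n},W}^{2}.
\end{align*}
After rescaling, and using~(\ref{robar}) once more (so the bumps of a cluster do not interact, up to exponentially small errors) together with $W(\varepsilon_{n}\,\cdot+g\bar{\xi}_{n})\to W_{i}$, the last term becomes $\sum_{g\in G/G_{i}}\Vert\omega_{i}(\cdot-g\sigma)-\omega_{i}\Vert_{W_{i}}^{2}+o(|\sigma|^{2})=\gamma_{i}\,|G/G_{i}|\,|\sigma|^{2}+o(|\sigma|^{2})$, with $\gamma_{i}>0$ the coefficient in $\Vert\omega_{i}(\cdot-a)-\omega_{i}\Vert_{W_{i}}^{2}=\gamma_{i}|a|^{2}+o(|a|^{2})$ (positive since $\omega_{i}$ is radial and nonconstant); and the orthogonality of $r_{n}$, together with $\Vert\partial_{\xi}^{2}\theta_{\varepsilon_{n},\xi}\Vert_{\varepsilon_{n},W}=O(\varepsilon_{n}^{-1/2})$, makes the middle term $O\!\bigl(\varepsilon_{n}^{-3/2}\Vert r_{n}\Vert_{\varepsilon_{n},W}\bigr)|\sigma|^{2}=o(|\sigma|^{2})$. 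Thus, for a fixed small $\sigma_{0}>0$, all $0<|\sigma|\le\sigma_{0}$ and $n$ large, $\varepsilon_{n}^{-3}F_{n}(\bar{\xi}_{n}+\varepsilon_{n}\sigma)\ge\varepsilon_{n}^{-3}m_{n}+\tfrac12\gamma_{i}|G/G_{i}||\sigma|^{2}>\varepsilon_{n}^{-3}m_{n}$, so $\bar{\xi}_{n}$ is the only minimizer of $F_{n}$ in the $\varepsilon_{n}\sigma_{0}$-ball around it in $M_{i}^{\rho}$. But any competing minimizer $\xi'$ has type $i$ and $\inf_{g\in G}|\xi'-g\bar{\xi}_{n}|=o(\varepsilon_{n})$ by the rigidity step, so some $g\xi'$ lies in this ball with $F_{n}(g\xi')=m_{n}$, forcing $g\xi'=\bar{\xi}_{n}$ and $G\xi'=G\bar{\xi}_{n}$ — contradicting the choice of $v_{n}$. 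The delicate points are the uniformity in $n$ of $\gamma_{i}$, $\sigma_{0}$ and of all the error estimates (which holds because $\omega_{i}$ is fixed and~(\ref{robar}) forbids interaction inside a cluster), and the local structure of $M_{i}^{\rho}$ near $M_{i}$, controlled by the fact that $G_{i}$ has finite index, hence is open in $G$, so that near $M_{i}$ the set $M_{i}^{\rho}$ is an open piece of the fixed-point subspace of $G_{i}$.
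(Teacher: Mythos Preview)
Your argument is correct and follows the natural line of proof: contradiction, Ekeland's principle to manufacture an almost-critical sequence, Proposition~\ref{epsilonPS} to force concentration at a single orbit in some $M_i$, a rigidity step confining all near-minimizers of $\xi\mapsto\Vert\,|v_n|-\theta_{\varepsilon_n,\xi}\Vert^{2}_{\varepsilon_n,W}$ to an $o(\varepsilon_n)$-neighbourhood of that orbit, and finally a second-order Taylor expansion showing the minimizing orbit is unique. The paper itself gives no proof here, deferring entirely to the analogous Proposition~5.3 in~\cite{cc2}; your scheme is precisely the one used there, so the approaches agree.
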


\begin{proof}
The proof is analogous to that of Proposition 5.3 in \cite{cc2}. We omit the details.
\end{proof}

Fix $\rho\in(0,\widehat{\rho})$ and $\varepsilon\in(0,\varepsilon_{\rho}).$
Proposition \ref{unique} allows us to define a map%
\begin{equation}
\widehat{\beta}_{\rho,\varepsilon,0}:\mathcal{M}_{\varepsilon,W}^{G}\cap
J_{\varepsilon,W}^{\varepsilon^{3}d_{\rho}}\longrightarrow\left(  M_{1}^{\rho
}\cup\cdot\cdot\cdot\cup M_{m}^{\rho}\right)  /G \label{bar}%
\end{equation}
by taking%
\[
\widehat{\beta}_{\rho,\varepsilon,0}(v):=G\xi_{\varepsilon,v}.
\]
Here, as usual, $J_{\varepsilon,W}^{c}:=\{v\in H^{1}(\mathbb{R}^{3}%
,\mathbb{R}):J_{\varepsilon,W}(v)\leq c\}.$ The map $\widehat{\beta}%
_{\rho,\varepsilon,0}$ is the $G$-equivariant analogon to the usual baricenter
map. It is only defined for functions in $\mathcal{M}_{\varepsilon,W}^{G}$
with small enough energy. We call it the \emph{local baryorbit map.} It
reflects the fact that such functions concentrate at a unique $G$-orbit with
minimal cardinality as $\varepsilon\rightarrow0.$

\section{Proofs of the main results}

\label{secproofs}Let $V_{\infty}:=\liminf_{\left\vert x\right\vert
\rightarrow\infty}V(x).$ Assumption (\ref{comp}) implies that
\[
\ell_{G,V}<\min_{x\in\mathbb{R}^{3}\setminus\{0\}}(\#Gx)V_{\infty}^{3/2}.
\]
We fix $\delta_{0}>0$ and $\lambda\in(0,V_{\infty})$ such that
\begin{equation}
\ell_{G,V}E_{1}+\delta_{0}<\min_{x\in\mathbb{R}^{3}\setminus\{0\}}%
(\#Gx)\lambda^{3/2}E_{1}<\min_{x\in\mathbb{R}^{3}\setminus\{0\}}%
(\#Gx)V_{\infty}^{3/2}E_{1}, \label{delta}%
\end{equation}
and define $W(x):=\min\{V(x),\lambda\}.$ This $W$ has all properties stated in
section \ref{baryorbit}, in particular, it is uniformly continuous. Moreover,
$\ell_{G,W}=\ell_{G,V}$ and $M_{G,W}=M_{G,V}.$

Let $\pi_{\varepsilon,W}:H^{1}(\mathbb{R}^{3},\mathbb{R})^{G}\setminus
\{0\}\rightarrow\mathcal{M}_{\varepsilon,W}^{G}$ \ denote the radial
projection onto the Nehari manifold, which is given by%
\begin{equation}
\pi_{\varepsilon,W}(v):=\frac{\varepsilon\left\Vert u\right\Vert
_{\varepsilon,W}}{\sqrt{\mathbb{D}(u)}}v. \label{rad}%
\end{equation}
Observe that%
\begin{equation}
J_{\varepsilon,W}(\pi_{\varepsilon,W}(v))=\frac{\varepsilon^{2}\left\Vert
v\right\Vert _{\varepsilon,W}^{4}}{4\mathbb{D}(v)}\text{ \ \ \ \ for all }v\in
H^{1}(\mathbb{R}^{3},\mathbb{R})^{G}\setminus\{0\}. \label{Jpi}%
\end{equation}
Let $\widehat{\iota}_{\varepsilon}$ be the map defined in Proposition
\ref{ingoing} and $\widehat{\beta}_{\rho,\varepsilon,0}$\ be as in
(\ref{bar}). Then, for $d_{\rho}>\ell_{G,V}E_{1}$ and $\varepsilon_{\rho}>0$
as in Proposition \ref{unique} the following holds.

\begin{proposition}
\label{bary}For each $\rho\in(0,\widehat{\rho})$ and $\varepsilon
\in(0,\varepsilon_{\rho}),$ the map
\[
\widehat{\beta}_{\rho,\varepsilon}\colon\mathcal{N}_{\varepsilon,A,V}^{\tau
}\cap J_{\varepsilon,A,V}^{\varepsilon^{3}d_{\rho}}\rightarrow\left(
M_{1}^{\rho}\cup\cdot\cdot\cdot\cup M_{m}^{\rho}\right)  /G,\hspace
{0.3in}\widehat{\beta}_{\rho,\varepsilon}(u):=\widehat{\beta}_{\rho
,\varepsilon,0}(\pi_{\varepsilon,W}(\left\vert u\right\vert )),
\]
is well defined and continuous and satisfies\newline(i) $\ \widehat{\beta
}_{\rho,\varepsilon}(\gamma u)=\widehat{\beta}_{\rho,\varepsilon}(u)$ for all
$\gamma\in\mathbb{S}^{1}$,\newline(ii) $\widehat{\beta}_{\rho,\varepsilon
}(\widehat{\iota}_{\varepsilon}(\xi))=G\xi$ for all $\xi\in M_{\tau}$ with
$J_{\varepsilon,A,V}(\widehat{\iota}_{\varepsilon}(\xi))\leq\varepsilon
^{3}d_{\rho}$.
\end{proposition}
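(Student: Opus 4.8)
The plan is to treat the three assertions in turn, after first checking that the composition defining $\widehat{\beta}_{\rho,\varepsilon}$ makes sense. So I would start by verifying that $\pi_{\varepsilon,W}(|u|)$ lies in the domain $\mathcal{M}_{\varepsilon,W}^{G}\cap J_{\varepsilon,W}^{\varepsilon^{3}d_{\rho}}$ of $\widehat{\beta}_{\rho,\varepsilon,0}$ for every $u\in\mathcal{N}_{\varepsilon,A,V}^{\tau}\cap J_{\varepsilon,A,V}^{\varepsilon^{3}d_{\rho}}$. Since $u$ satisfies \eqref{tau-inv} we have $|u(gx)|=|u(x)|$, so by the diamagnetic inequality \eqref{di} one has $|u|\in H^{1}(\mathbb{R}^{3},\mathbb{R})^{G}\setminus\{0\}$ and $\pi_{\varepsilon,W}(|u|)\in\mathcal{M}_{\varepsilon,W}^{G}$; moreover, using $W\le V$, the pointwise bound $\varepsilon^{2}|\nabla|u||^{2}\le|\nabla_{\varepsilon,A}u|^{2}$, the identity $\mathbb{D}(|u|)=\mathbb{D}(u)$, formulas \eqref{Jpi} and \eqref{JpiA}, and the fact that $\pi_{\varepsilon,A,V}$ restricts to the identity on $\mathcal{N}_{\varepsilon,A,V}^{\tau}$, one gets
\[
J_{\varepsilon,W}(\pi_{\varepsilon,W}(|u|))=\frac{\varepsilon^{2}\Vert\,|u|\,\Vert_{\varepsilon,W}^{4}}{4\,\mathbb{D}(|u|)}\le\frac{\varepsilon^{2}\Vert u\Vert_{\varepsilon,A,V}^{4}}{4\,\mathbb{D}(u)}=J_{\varepsilon,A,V}(u)\le\varepsilon^{3}d_{\rho}.
\]
Hence $\widehat{\beta}_{\rho,\varepsilon}$ is well defined with values in $(M_{1}^{\rho}\cup\cdots\cup M_{m}^{\rho})/G$. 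Continuity I would obtain by composition: $u\mapsto|u|$ is continuous from $H_{\varepsilon,A}^{1}(\mathbb{R}^{3},\mathbb{C})$ into $H^{1}(\mathbb{R}^{3},\mathbb{R})$, the projection $\pi_{\varepsilon,W}$ is continuous off the origin by \eqref{rad}, and $\widehat{\beta}_{\rho,\varepsilon,0}$ is continuous because the minimizing orbit in Proposition \ref{unique} is unique while $(v,\zeta)\mapsto\Vert\,|v|-\theta_{\varepsilon,\zeta}\Vert_{\varepsilon,W}$ depends continuously on both arguments. Property (i) is immediate, since $|\gamma u|=|u|$ whenever $\gamma\in\mathbb{S}^{1}$.

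For (ii) I would fix $\xi\in M_{\tau}$, say $\xi\in M_{i}$ with $G_{i}\subset\ker\tau$; under the hypothesis $J_{\varepsilon,A,V}(\widehat{\iota}_{\varepsilon}(\xi))\le\varepsilon^{3}d_{\rho}$ the point $\widehat{\iota}_{\varepsilon}(\xi)$ lies in the domain of $\widehat{\beta}_{\rho,\varepsilon}$ (it belongs to $\mathcal{N}_{\varepsilon,A,V}^{\tau}$ by Proposition \ref{ingoing}). Since $\widehat{\iota}_{\varepsilon}(\xi)=\pi_{\varepsilon,A,V}(\psi_{\varepsilon,\xi})$ is a positive scalar multiple of $\psi_{\varepsilon,\xi}$ and $\pi_{\varepsilon,W}$ is invariant under multiplication by positive constants, $\pi_{\varepsilon,W}(|\widehat{\iota}_{\varepsilon}(\xi)|)=\pi_{\varepsilon,W}(|\psi_{\varepsilon,\xi}|)$. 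For $\varepsilon$ small the translated bumps in \eqref{psi} have pairwise disjoint supports by \eqref{robar}, and since $|\tau(g)|=1$ and $\upsilon_{i,\varepsilon}\ge0$ this gives $|\psi_{\varepsilon,\xi}|=\sum_{g\xi\in G\xi}\upsilon_{i,\varepsilon}\!\left(\frac{\cdot-g\xi}{\varepsilon}\right)$. By the choice \eqref{delta} of $\lambda$, $W$ coincides with $V$ on a fixed neighbourhood of $M_{G,V}=M_{G,W}$, so $W_{i}=V_{i}$ and $\upsilon_{i,\varepsilon}\to\omega_{i}$ in $H^{1}(\mathbb{R}^{3},\mathbb{R})$, where $\omega_{i}$ is precisely the ground state entering $\theta_{\varepsilon,\xi}$. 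A rescaling computation then yields
\[
\varepsilon^{-3}\Vert\pi_{\varepsilon,W}(|\psi_{\varepsilon,\xi}|)-\theta_{\varepsilon,\xi}\Vert_{\varepsilon,W}^{2}\longrightarrow0\qquad\text{uniformly in }\xi\in M_{\tau},
\]
and $\theta_{\varepsilon,\xi}\in\Theta_{\rho,\varepsilon}$ because $M_{i}\subset M_{i}^{\rho}$; thus $\pi_{\varepsilon,W}(|\psi_{\varepsilon,\xi}|)$ concentrates, as $\varepsilon\to0$, at the single orbit $G\xi$. Combining this with the uniqueness of the minimizing orbit from Proposition \ref{unique} and the concentration dichotomy behind it---a function of $\mathcal{M}_{\varepsilon,W}^{G}$ with energy at most $\varepsilon^{3}d_{\rho}$ is $\Vert\cdot\Vert_{\varepsilon,W}$-close to exactly one element of $\Theta_{\rho,\varepsilon}$ and to no other---I would conclude that $\widehat{\beta}_{\rho,\varepsilon,0}(\pi_{\varepsilon,W}(|\psi_{\varepsilon,\xi}|))=G\xi$, after shrinking $\varepsilon_{\rho}$ if necessary.

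The routine parts are the first paragraph and the elementary identity for $|\psi_{\varepsilon,\xi}|$. The hard part will be the last identification in (ii): disjointness of supports by itself does not rule out that the (unique) minimizer is some orbit $G\zeta$ with $\zeta\in M_{1}^{\rho}\cup\cdots\cup M_{m}^{\rho}$ lying near $\xi$ but off $G\xi$, since $M_{i}^{\rho}$ contains points arbitrarily close to $\xi$ whose $G$-orbit differs from $G\xi$. Excluding this needs the quantitative concentration estimate underlying Proposition \ref{unique} (equivalently, the argument of Proposition \ref{epsilonPS}) applied to the explicit family $\pi_{\varepsilon,W}(|\psi_{\varepsilon,\xi}|)$; this is the step that mirrors the proof of Proposition~5.3 in \cite{cc2}, and I would organize the proof so that all the $\varepsilon$-smallness requirements---disjoint supports, $\upsilon_{i,\varepsilon}$ close to $\omega_{i}$, and validity of the concentration dichotomy---are absorbed into a single threshold $\varepsilon_{\rho}$.
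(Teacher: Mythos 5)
Your proposal is correct and follows essentially the same route as the paper: the paper's entire written proof consists of observing that $|u|\in H^{1}(\mathbb{R}^{3},\mathbb{R})^{G}\setminus\{0\}$ and deriving the chain $J_{\varepsilon,W}(\pi_{\varepsilon,W}(|u|))\le J_{\varepsilon,V}(\pi_{\varepsilon,V}(|u|))\le J_{\varepsilon,A,V}(u)$ from $W\le V$, \eqref{Jpi}, \eqref{JpiA} and the diamagnetic inequality \eqref{di} (this is \eqref{inf}), which is exactly your first paragraph, after which it declares continuity and properties (i)--(ii) ``straightforward to verify.'' Your more careful discussion of (ii) --- the computation of $|\psi_{\varepsilon,\xi}|$, the identification $W_i=V_i$, and the honest flag that pinning the unique minimizing orbit to exactly $G\xi$ (rather than a nearby orbit in $M_i^{\rho}$) requires the quantitative concentration machinery behind Proposition \ref{unique}, i.e.\ Proposition 5.3 of \cite{cc2} --- goes beyond the detail the paper itself provides and is consistent with where the paper implicitly locates that work, so it is a refinement rather than a divergence.
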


\begin{proof}
If $u\in\mathcal{N}_{\varepsilon,A,V}^{\tau}$ then $\left\vert u\right\vert
\in H^{1}(\mathbb{R}^{3},\mathbb{R})^{G}\setminus\{0\}$ and, since $W\leq V,$
formulas (\ref{Jpi}) and (\ref{JpiA}), together with the diamagnetic
inequality (\ref{di}) yield%
\begin{equation}
J_{\varepsilon,W}(\pi_{\varepsilon,W}(\left\vert u\right\vert ))\leq
J_{\varepsilon,V}(\pi_{\varepsilon,V}(\left\vert u\right\vert ))\leq
J_{\varepsilon,A,V}(u). \label{inf}%
\end{equation}
So $J_{\varepsilon,W}(\pi_{\varepsilon,W}(\left\vert u\right\vert
))\leq\varepsilon^{3}d_{\rho}$ if $J_{\varepsilon,A,V}(u)\leq\varepsilon
^{3}d_{\rho}.$ Therefore, $\widehat{\beta}_{\rho,\varepsilon}$ is well
defined. It is straightforward to verify that it has the desired properties.
\end{proof}

Let%
\[
M_{\tau}^{\rho}:=%
{\textstyle\bigcup\limits_{G_{i}\subset\ker\tau}}
M_{i}^{\rho}.
\]
Propositions \ref{ingoing}\ and \ref{bary}\ allow us to estimate the
Lusternik-Schnirelmann category of low energy sublevel sets as follows.

\begin{corollary}
\label{cat}For every $\rho\in(0,\widehat{\rho})$ and $d\in(\ell_{G,V}%
E_{1},d_{\rho})$ there exists $\varepsilon_{\rho,d}>0$ such that%
\[
\text{\emph{cat}}_{M_{\tau}^{\rho}/G}M_{\tau}/G\leq\text{\emph{cat}}\left(
(\mathcal{N}_{\varepsilon,A,V}^{\tau}\cap J_{\varepsilon,A,V}^{\varepsilon
^{3}d})/\mathbb{S}^{1}\right)
\]
for every $\varepsilon\in(0,\varepsilon_{\rho,d}).$
\end{corollary}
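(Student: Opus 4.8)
The plan is to deduce Corollary \ref{cat} from the standard Lusternik--Schnirelmann inequality for the category of a set that admits maps into and out of a given space whose composition is homotopic to the inclusion. Concretely, I would use the two maps already constructed, namely the entrance map $\widehat{\iota}_{\varepsilon}\colon M_{\tau}\to\mathcal{N}_{\varepsilon,A,V}^{\tau}$ from Proposition \ref{ingoing} and the local baryorbit map $\widehat{\beta}_{\rho,\varepsilon}\colon\mathcal{N}_{\varepsilon,A,V}^{\tau}\cap J_{\varepsilon,A,V}^{\varepsilon^{3}d_{\rho}}\to (M_{1}^{\rho}\cup\cdots\cup M_{m}^{\rho})/G$ from Proposition \ref{bary}, and show that after passing to quotients by $G$ and by $\mathbb{S}^{1}$ their composition is the inclusion $M_{\tau}/G\hookrightarrow M_{\tau}^{\rho}/G$.

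The first step is to fix the energy level. Given $d\in(\ell_{G,V}E_{1},d_{\rho})$, Proposition \ref{ingoing} provides $\varepsilon_{d}>0$ so that $\varepsilon^{-3}J_{\varepsilon,A,V}(\widehat{\iota}_{\varepsilon}(\xi))\leq d<d_{\rho}$ for all $\xi\in M_{\tau}$ and $\varepsilon\in(0,\varepsilon_{d})$; setting $\varepsilon_{\rho,d}:=\min\{\varepsilon_{d},\varepsilon_{\rho}\}$, for $\varepsilon\in(0,\varepsilon_{\rho,d})$ the image $\widehat{\iota}_{\varepsilon}(M_{\tau})$ lies in $\mathcal{N}_{\varepsilon,A,V}^{\tau}\cap J_{\varepsilon,A,V}^{\varepsilon^{3}d}\subset\mathcal{N}_{\varepsilon,A,V}^{\tau}\cap J_{\varepsilon,A,V}^{\varepsilon^{3}d_{\rho}}$, so the composition $\widehat{\beta}_{\rho,\varepsilon}\circ\widehat{\iota}_{\varepsilon}$ is defined on all of $M_{\tau}$. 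The second step is to pass to quotients: by Proposition \ref{ingoing}, $\widehat{\iota}_{\varepsilon}$ satisfies $\tau(g)\widehat{\iota}_{\varepsilon}(g\xi)=\widehat{\iota}_{\varepsilon}(\xi)$, hence it descends to a continuous map $\iota_{\varepsilon}\colon M_{\tau}/G\to(\mathcal{N}_{\varepsilon,A,V}^{\tau}\cap J_{\varepsilon,A,V}^{\varepsilon^{3}d})/\mathbb{S}^{1}$ (two points of $M_{\tau}$ in the same $G$-orbit are sent to points on the same $\mathbb{S}^{1}$-orbit), while by part (i) of Proposition \ref{bary} the map $\widehat{\beta}_{\rho,\varepsilon}$ is $\mathbb{S}^{1}$-invariant and $G$-equivariant to the quotient, so it descends to $\beta_{\rho,\varepsilon}\colon(\mathcal{N}_{\varepsilon,A,V}^{\tau}\cap J_{\varepsilon,A,V}^{\varepsilon^{3}d_{\rho}})/\mathbb{S}^{1}\to M_{\tau}^{\rho}/G$. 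The third step is to identify the composition: by part (ii) of Proposition \ref{bary}, $\widehat{\beta}_{\rho,\varepsilon}(\widehat{\iota}_{\varepsilon}(\xi))=G\xi$ for every $\xi\in M_{\tau}$ (using that $J_{\varepsilon,A,V}(\widehat{\iota}_{\varepsilon}(\xi))\leq\varepsilon^{3}d\leq\varepsilon^{3}d_{\rho}$), so $\beta_{\rho,\varepsilon}\circ\iota_{\varepsilon}$ equals the inclusion $M_{\tau}/G\hookrightarrow M_{\tau}^{\rho}/G$ on the nose, in particular it is homotopic to it.

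The final step is to invoke the general property of LS category: if a space $Z$ factors through $Y$ via maps whose composition is homotopic to the inclusion $Z\hookrightarrow X$ (here $X=M_{\tau}^{\rho}/G$, $Y=(\mathcal{N}_{\varepsilon,A,V}^{\tau}\cap J_{\varepsilon,A,V}^{\varepsilon^{3}d})/\mathbb{S}^{1}$, $Z=M_{\tau}/G$), then $\mathrm{cat}_{X}(Z)\leq\mathrm{cat}(Y)$; applying this yields
\[
\mathrm{cat}_{M_{\tau}^{\rho}/G}\,M_{\tau}/G\leq\mathrm{cat}\left((\mathcal{N}_{\varepsilon,A,V}^{\tau}\cap J_{\varepsilon,A,V}^{\varepsilon^{3}d})/\mathbb{S}^{1}\right),
\]
which is exactly the claim. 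I expect the only delicate point to be checking that the quotient maps $\iota_{\varepsilon}$ and $\beta_{\rho,\varepsilon}$ are well defined and continuous with respect to the quotient topologies and that the $\mathbb{S}^{1}$-orbit space appearing here coincides with the one in the statement; these are routine once the equivariance identities from Propositions \ref{ingoing} and \ref{bary} are in hand, and there is no real analytic obstacle, the substance having already been absorbed into those propositions and into Proposition \ref{epsilonPS}.
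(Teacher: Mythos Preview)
Your overall strategy is exactly the one the paper uses: combine $\widehat{\iota}_{\varepsilon}$ and $\widehat{\beta}_{\rho,\varepsilon}$, pass to quotients, observe that the composition is the inclusion, and apply the standard LS category inequality. The choice $\varepsilon_{\rho,d}:=\min\{\varepsilon_{d},\varepsilon_{\rho}\}$ is also the paper's.

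There is, however, one imprecision. You write that $\widehat{\beta}_{\rho,\varepsilon}$ descends to a map
\[
\beta_{\rho,\varepsilon}\colon(\mathcal{N}_{\varepsilon,A,V}^{\tau}\cap J_{\varepsilon,A,V}^{\varepsilon^{3}d_{\rho}})/\mathbb{S}^{1}\longrightarrow M_{\tau}^{\rho}/G,
\]
but Proposition \ref{bary} only gives the codomain $(M_{1}^{\rho}\cup\cdots\cup M_{m}^{\rho})/G$, which may be strictly larger than $M_{\tau}^{\rho}/G=\bigcup_{G_{i}\subset\ker\tau}M_{i}^{\rho}/G$. Nothing in Proposition \ref{bary} guarantees that an arbitrary $u\in\mathcal{N}_{\varepsilon,A,V}^{\tau}$ with small energy has its baryorbit in the $\tau$-compatible piece; the baryorbit map only sees $|u|$, which is merely $G$-invariant. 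With this larger codomain your LS inequality would yield $\text{cat}_{(M_{1}^{\rho}\cup\cdots\cup M_{m}^{\rho})/G}(M_{\tau}/G)$ on the left, not $\text{cat}_{M_{\tau}^{\rho}/G}(M_{\tau}/G)$.

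The paper fixes this by introducing
\[
\mathcal{C}:=\{u\in\mathcal{N}_{\varepsilon,A,V}^{\tau}\cap J_{\varepsilon,A,V}^{\varepsilon^{3}d}:\widehat{\beta}_{\rho,\varepsilon}(u)\in M_{\tau}^{\rho}/G\},
\]
noting that the $M_{i}^{\rho}$ are pairwise disjoint, so $\mathcal{C}$ is a union of connected components of the sublevel set and hence $\text{cat}(\mathcal{C}/\mathbb{S}^{1})\leq\text{cat}\big((\mathcal{N}_{\varepsilon,A,V}^{\tau}\cap J_{\varepsilon,A,V}^{\varepsilon^{3}d})/\mathbb{S}^{1}\big)$. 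The maps $\iota_{\varepsilon}$ and $\beta_{\rho,\varepsilon}$ are then taken with $\mathcal{C}/\mathbb{S}^{1}$ as the intermediate space, where the codomain of $\beta_{\rho,\varepsilon}$ is genuinely $M_{\tau}^{\rho}/G$. An equivalent repair is to keep the larger codomain and observe that, because the $M_{i}^{\rho}/G$ are open and closed in the union, $\text{cat}_{(M_{1}^{\rho}\cup\cdots\cup M_{m}^{\rho})/G}(M_{\tau}/G)=\text{cat}_{M_{\tau}^{\rho}/G}(M_{\tau}/G)$. Either way the gap is minor, but as written your codomain claim is unjustified.
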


\begin{proof}
Set $\varepsilon_{\rho,d}:=\min\{\varepsilon_{d},\varepsilon_{\rho}\}$ where
$\varepsilon_{d}$\ is as in Proposition \ref{ingoing}. Fix $\varepsilon
\in(0,\varepsilon_{\rho,d}).$ Then,%
\[
J_{\varepsilon,A,V}(\widehat{\iota}_{\varepsilon}(\xi))\leq\varepsilon
^{3}d\text{ \ \ and \ }\widehat{\beta}_{\rho,\varepsilon}(\widehat{\iota
}_{\varepsilon}(\xi))=\xi\text{ \ \ for all }\xi\in M_{\tau}.
\]
Since $M_{1}^{\rho},...,M_{m}^{\rho}$ are $G$-invariant and pairwise disjoint,
the set
\[
\mathcal{C}:=\{u\in\mathcal{N}_{\varepsilon,A,V}^{\tau}\cap J_{\varepsilon
,A,V}^{\varepsilon^{3}d}:\widehat{\beta}_{\rho,\varepsilon}(u)\in M_{\tau
}^{\rho}/G\}
\]
is a union of connected components of $\mathcal{N}_{\varepsilon,A,V}^{\tau
}\cap J_{\varepsilon,A,V}^{\varepsilon^{3}d}.$ Therefore,%
\[
\text{cat}\left(  \mathcal{C}/\mathbb{S}^{1}\right)  \leq\text{cat}\left(
(\mathcal{N}_{\varepsilon,A,V}^{\tau}\cap J_{\varepsilon,A,V}^{\varepsilon
^{3}d})/\mathbb{S}^{1}\right)  .
\]
By Propositions \ref{ingoing} and \ref{bary}, the maps%
\[
M_{\tau}/G\overset{\iota_{\varepsilon}}{\longrightarrow}\mathcal{C}%
/\mathbb{S}^{1}\overset{\beta_{\rho,\varepsilon}}{\longrightarrow}M_{\tau
}^{\rho}/G,
\]
given by $\iota_{\varepsilon}(G\xi):=\widehat{\iota}_{\varepsilon}(\xi)$ and
$\beta_{\rho,\varepsilon}(\mathbb{S}^{1}u):=\widehat{\beta}_{\rho,\varepsilon
}(u),$ are well defined and satisfy $\beta_{\rho,\varepsilon}(\iota
_{\varepsilon}(\xi))=\xi$ for all\ $\xi\in M_{\tau}.$ Therefore,%
\[
\text{cat}_{M_{\tau}^{\rho}/G}M_{\tau}/G\leq\text{cat}\left(  \mathcal{C}%
/\mathbb{S}^{1}\right)  .
\]
This finishes the proof.
\end{proof}

Another consequence of our previous results is the following.

\begin{corollary}
\label{infepsilon}If there exists $\xi\in\mathbb{R}^{3}$ such that
$(\#G\xi)V^{3/2}(\xi)=\ell_{G,V}$ and $G_{\xi}\subset\ker\tau$, then%
\[
\lim_{\varepsilon\rightarrow\infty}\varepsilon^{-3}c_{\varepsilon,A,V}^{\tau
}=\ell_{G,V}E_{1},
\]
where $c_{\varepsilon,A,V}^{\tau}:=\inf_{\mathcal{N}_{\varepsilon,A,V}^{\tau}%
}J_{\varepsilon,A,V}.$
\end{corollary}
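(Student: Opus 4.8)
The plan is to prove the two inequalities $\limsup_{\varepsilon\to 0}\varepsilon^{-3}c_{\varepsilon,A,V}^{\tau}\le\ell_{G,V}E_{1}$ and $\liminf_{\varepsilon\to 0}\varepsilon^{-3}c_{\varepsilon,A,V}^{\tau}\ge\ell_{G,V}E_{1}$ separately.

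For the upper bound, I would use the test functions already constructed in Section~\ref{embedding}. Pick $i$ with $G_{i}\subset\ker\tau$ and any $\xi\in M_{i}$ (such exist by hypothesis, since the point $\xi$ with $(\#G\xi)V^{3/2}(\xi)=\ell_{G,V}$ and $G_{\xi}\subset\ker\tau$ lies in some $M_{i}$ with $G_{i}\subset\ker\tau$). Then $\widehat{\iota}_{\varepsilon}(\xi)=\pi_{\varepsilon,A,V}(\psi_{\varepsilon,\xi})\in\mathcal{N}_{\varepsilon,A,V}^{\tau}$, so $c_{\varepsilon,A,V}^{\tau}\le J_{\varepsilon,A,V}(\widehat{\iota}_{\varepsilon}(\xi))$, and by Lemma~\ref{lemin}(c), $\varepsilon^{-3}J_{\varepsilon,A,V}(\widehat{\iota}_{\varepsilon}(\xi))\to\ell_{G,V}E_{1}$. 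Hence $\limsup_{\varepsilon\to 0}\varepsilon^{-3}c_{\varepsilon,A,V}^{\tau}\le\ell_{G,V}E_{1}$.

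For the lower bound, I would pass through the real-valued problem via the diamagnetic inequality. Let $W(x):=\min\{V(x),\lambda\}$ be as in Section~\ref{secproofs}, so $\ell_{G,W}=\ell_{G,V}$. For any $u\in\mathcal{N}_{\varepsilon,A,V}^{\tau}$ we have $|u|\in H^{1}(\mathbb{R}^{3},\mathbb{R})^{G}\setminus\{0\}$, and by (\ref{inf}) (which only uses $W\le V$, (\ref{Jpi}), (\ref{JpiA}) and the diamagnetic inequality (\ref{di})),
\[
c_{\varepsilon,W}^{G}\le J_{\varepsilon,W}(\pi_{\varepsilon,W}(|u|))\le J_{\varepsilon,A,V}(u).
\]
Taking the infimum over $\mathcal{N}_{\varepsilon,A,V}^{\tau}$ gives $c_{\varepsilon,W}^{G}\le c_{\varepsilon,A,V}^{\tau}$, hence $\varepsilon^{-3}c_{\varepsilon,W}^{G}\le\varepsilon^{-3}c_{\varepsilon,A,V}^{\tau}$. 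By the first inequality in Lemma~\ref{cotas}, $\varepsilon^{-3}c_{\varepsilon,W}^{G}\ge(\inf_{\mathbb{R}^{3}}W)^{3/2}E_{1}>0$, so the family $\varepsilon^{-3}c_{\varepsilon,A,V}^{\tau}$ is bounded below by a positive constant; combined with the upper bound above, $\widehat{c}:=\liminf_{\varepsilon\to 0}\varepsilon^{-3}c_{\varepsilon,W}^{G}$ is finite and positive. Now take any sequence $\varepsilon_{n}\to 0$ realizing $\liminf_{\varepsilon\to 0}\varepsilon^{-3}c_{\varepsilon,W}^{G}=\widehat{c}$ together with almost-minimizers $v_{n}\in\mathcal{M}_{\varepsilon_{n},W}^{G}$ satisfying (\ref{cs}) (by Ekeland's variational principle applied on the Nehari manifold $\mathcal{M}_{\varepsilon_{n},W}^{G}$). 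Proposition~\ref{epsilonPS}(iv) then yields $\widehat{c}=\ell_{G,W}E_{1}=\ell_{G,V}E_{1}$, that is, $\liminf_{\varepsilon\to 0}\varepsilon^{-3}c_{\varepsilon,W}^{G}=\ell_{G,V}E_{1}$, whence $\liminf_{\varepsilon\to 0}\varepsilon^{-3}c_{\varepsilon,A,V}^{\tau}\ge\ell_{G,V}E_{1}$. Together with the upper bound this proves $\lim_{\varepsilon\to 0}\varepsilon^{-3}c_{\varepsilon,A,V}^{\tau}=\ell_{G,V}E_{1}$.

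The main obstacle is really packaged into Proposition~\ref{epsilonPS}, which does all the heavy concentration-compactness work; the remaining effort is the bookkeeping: verifying that Ekeland's principle produces a sequence satisfying (\ref{cs}) with the correct scaling of the gradient, and checking that the diamagnetic reduction inequality (\ref{inf}) indeed gives $c_{\varepsilon,W}^{G}\le c_{\varepsilon,A,V}^{\tau}$ uniformly (which is immediate since (\ref{inf}) holds pointwise in $u$). I would also note that the statement's limit is written as $\varepsilon\to\infty$, which is evidently a typo for $\varepsilon\to 0$, consistent with the rest of the paper.
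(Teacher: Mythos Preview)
Your proposal is correct and follows essentially the same route as the paper: the upper bound via Lemma~\ref{lemin}(c) and the lower bound via the diamagnetic comparison~(\ref{inf}) together with Proposition~\ref{epsilonPS}(iv), using $\ell_{G,W}=\ell_{G,V}$. Your additional remark that Ekeland's principle is what furnishes the sequence satisfying~(\ref{cs}) is a detail the paper leaves implicit, and your observation about the $\varepsilon\to\infty$ typo is also correct.
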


\begin{proof}
Inequality (\ref{inf}) yields $c_{\varepsilon,W}^{G}:=\inf_{\mathcal{M}%
_{\varepsilon,W}^{G}}J_{\varepsilon,W}\leq\inf_{\mathcal{N}_{\varepsilon
,A,V}^{\tau}}J_{\varepsilon,A,V}=:c_{\varepsilon,A,V}^{\tau}.$ By Proposition
\ref{epsilonPS} and Lemma \ref{lemin}\emph{(c)},%
\[
\ell_{G,W}E_{1}=\lim_{\varepsilon\rightarrow\infty}\varepsilon^{-3}%
c_{\varepsilon,W}^{G}\leq\liminf_{\varepsilon\rightarrow0}\varepsilon
^{-3}c_{\varepsilon,A,V}^{\tau}\leq\limsup_{\varepsilon\rightarrow\infty
}\varepsilon^{-3}c_{\varepsilon,A,V}^{\tau}\leq\ell_{G,V}E_{1}.
\]
Since $\ell_{G,W}E_{1}=\ell_{G,V}E_{1},$ our claim follows.
\end{proof}

\bigskip

\noindent\textbf{Proof of Theorem \ref{mainthm1}.}\qquad Let $\rho,\delta>0$
be given. We may assume that $\rho\in(0,\widehat{\rho})$ with $\widehat{\rho}$
as in (\ref{robar}) and that $\delta\in(0,\delta_{0})$ with $\delta_{0}$ as in
(\ref{delta}). By Corollary \ref{infepsilon} there exists $\varepsilon
_{\delta}>0$ such that%
\[
\ell_{G,V}E_{1}-\delta<\varepsilon^{-3}c_{\varepsilon,A,V}^{\tau}%
\text{\hspace{0.3in}for all }\varepsilon\in(0,\varepsilon_{\delta}).
\]
Fix $d\in(\ell_{G,V}E_{1},\min\{d_{\rho},\ell_{G,V}E_{1}+\delta\})$ and set
$\widehat{\varepsilon}:=\min\{\varepsilon_{\delta},\varepsilon_{\rho,d}\}$
with $\varepsilon_{\rho,d}$ as in Corollary \ref{cat}. Since (\ref{delta})
holds, Proposition \ref{palaissmale} asserts that $J_{\varepsilon,A,V}%
\colon\mathcal{N}_{\varepsilon,A,V}^{\tau}\rightarrow\mathbb{R}$ satisfies
$(PS)_{c}$ for every $c\leq\varepsilon^{3}d$. Applying Proposition \ref{ls}
and Corollary \ref{cat} we conclude that $J_{\varepsilon,A,V}$ has at least%
\[
\text{cat}_{M_{\tau}^{\rho}/G}M_{\tau}/G
\]
geometrically distinct solutions $u\in\mathcal{N}_{\varepsilon,A,V}^{\tau}$
satisfying%
\[
\varepsilon^{3}\ell_{G,V}E_{1}-\varepsilon^{3}\delta<J_{\varepsilon
,A,V}(u)=\frac{1}{4\varepsilon^{2}}\mathbb{D}(u)\leq\varepsilon^{3}%
d<\varepsilon^{3}\ell_{G,V}E_{1}+\varepsilon^{3}\delta,
\]
for each $\varepsilon\in(0,\widehat{\varepsilon})$, as claimed. \qed\noindent

\bigskip

\noindent\textbf{Proof of Theorem \ref{concentrationthm}.}\qquad After passing
to a subsequence, we may assume that $\ell_{G,V}E_{1}-\frac{1}{2n}%
\leq\varepsilon_{n}^{-3}c_{\varepsilon_{n},W}^{G}$ and $\delta_{n}\leq\frac
{1}{2n}.$ Then, inequality (\ref{inf}) yields%
\begin{equation}
c_{\varepsilon_{n},W}^{G}\leq J_{\varepsilon_{n},W}(\pi_{\varepsilon_{n}%
,W}\left(  \left\vert u_{n}\right\vert \right)  )\leq J_{\varepsilon_{n}%
,A,V}(u_{n})\leq\varepsilon_{n}^{3}(\ell_{G}E_{1}+\delta_{n})\leq
c_{\varepsilon_{n},W}^{G}+\varepsilon_{n}^{3}/n. \label{inc}%
\end{equation}
By Ekeland's variational principle \cite[Theorem 8.5]{w} we may choose
$v_{n}\in\mathcal{M}_{\varepsilon_{n},W}^{G}$ such that%
\begin{equation}
\varepsilon_{n}^{-3}\left\Vert \pi_{\varepsilon_{n},W}\left(  \left\vert
u_{n}\right\vert \right)  -v_{n}\right\Vert _{\varepsilon_{n},W}%
^{2}\rightarrow0, \label{ic1}%
\end{equation}%
\[
\varepsilon_{n}^{-3}J_{\varepsilon_{n},W}(v_{n})\rightarrow\ell_{G}E_{1}%
\quad\text{and}\quad\varepsilon_{n}^{-3}\Vert\nabla_{\varepsilon_{n}%
}J_{\varepsilon_{n},W}(v_{n})\Vert_{\varepsilon_{n},W}^{2}\rightarrow0.
\]
After passing again to a subsequence, Proposition \ref{epsilonPS} gives a
sequence $(\xi_{n})$ in $\mathbb{R}^{3}$ such that $\xi_{n}\rightarrow\xi\in
M_{\tau}$, $G_{\xi_{n}}=G_{\xi},$ and%
\begin{equation}
\varepsilon_{n}^{-3}\left\Vert v_{n}-%
{\textstyle\sum\limits_{g\xi_{n}\in G\xi_{n}}}
\omega_{\xi}\left(  \frac{\cdot-g\xi_{n}}{\varepsilon_{n}}\right)  \right\Vert
_{\varepsilon_{n},W}^{2}\rightarrow0. \label{ic2}%
\end{equation}
Since $u_{n}\in\mathcal{N}_{\varepsilon_{n},A,V}^{\tau},$ multiplying
inequality (\ref{inc}) by $4\varepsilon_{n}^{2}\left[  \mathbb{D}%
(|u_{n}|)\right]  ^{-1}=4\varepsilon_{n}^{2}\left[  \mathbb{D}(u_{n})\right]
^{-1}=\left[  J_{\varepsilon_{n},A,V}(u_{n})\right]  ^{-1},$ using (\ref{Jpi})
and (\ref{JpiA}), and observing that $\varepsilon_{n}^{-3}J_{\varepsilon
_{n},A,V}(u_{n})\rightarrow\ell_{G}E_{1},$ we get%
\begin{align*}
\left\vert 1-\left(  \frac{\varepsilon_{n}\Vert\left\vert u_{n}\right\vert
\Vert_{\varepsilon_{n},W}}{\sqrt{\mathbb{D}(|u_{n}|)}}\right)  ^{4}%
\right\vert  &  =\left\vert \left(  \frac{\varepsilon_{n}^{2}\Vert u_{n}%
\Vert_{\varepsilon_{n},A,V}^{2}}{\mathbb{D}(u_{n})}\right)  ^{2}-\left(
\frac{\varepsilon_{n}^{2}\Vert\left\vert u_{n}\right\vert \Vert_{\varepsilon
_{n},W}^{2}}{\mathbb{D}(|u_{n}|)}\right)  ^{2}\right\vert \\
&  \leq\left(  \varepsilon_{n}^{-3}J_{\varepsilon_{n},A,V}(u_{n})\right)
^{-1}\frac{1}{n}\rightarrow0.
\end{align*}
Recalling (\ref{rad}) and using the diamagnetic inequality (\ref{di}), we then
obtain%
\begin{align}
\varepsilon_{n}^{-3}\Vert\left\vert u_{n}\right\vert -\pi_{\varepsilon_{n}%
,W}\left(  \left\vert u_{n}\right\vert \right)  \Vert_{\varepsilon_{n},W}^{2}
&  =\left\vert 1-\frac{\varepsilon_{n}\Vert\left\vert u_{n}\right\vert
\Vert_{\varepsilon_{n},W}}{\sqrt{\mathbb{D}(|u_{n}|)}}\right\vert
^{2}\varepsilon_{n}^{-3}\Vert\left\vert u_{n}\right\vert \Vert_{\varepsilon
_{n},W}^{2}\nonumber\\
&  \leq\left\vert 1-\frac{\varepsilon_{n}\Vert\left\vert u_{n}\right\vert
\Vert_{\varepsilon_{n},W}}{\sqrt{\mathbb{D}(|u_{n}|)}}\right\vert
^{2}4\varepsilon_{n}^{-3}J_{\varepsilon_{n},A,V}(u_{n})\rightarrow0.
\label{ic3}%
\end{align}
Finally, combining (\ref{ic1}), (\ref{ic2}), and (\ref{ic3}) we conclude that%
\[
\varepsilon_{n}^{-3}\left\Vert \left\vert u_{n}\right\vert -%
{\textstyle\sum\limits_{g\xi_{n}\in G\xi_{n}}}
\omega_{\xi}\left(  \frac{\cdot-g\xi_{n}}{\varepsilon_{n}}\right)  \right\Vert
_{\varepsilon_{n},W}^{2}\rightarrow0.
\]
Since $\left\Vert v\right\Vert _{\varepsilon}^{2}\leq C\left\Vert v\right\Vert
_{\varepsilon,W}^{2}$ for some constant independent of $\varepsilon,$ our
claim follows. \qed\noindent


\end{document}